\newtheorem{theorem}{Theorem}[section]
\newtheorem{definition}[theorem]{Definition}
\newtheorem{corollary}[theorem]{Corollary}
\newtheorem{proposition}[theorem]{Proposition}
\newtheorem{lemma}[theorem]{Lemma}
\newtheorem{remark}[theorem]{Remark}
\newtheorem{example}[theorem]{Example}
\newcommand{\pr}{\indent{\em Proof: \ }}
\newcommand{\qed}{\hspace*{5 mm}$\square$\bigskip}
\newenvironment{proof}{\noindent {\pr}\ }{\qed}
\newcommand{\Z}{{\mathbb{Z}}}
\newcommand{\F}{\mathbb{F}}
\newcommand{\D}{{\cal D}}
\newcommand{\cS}{{\cal S}}
\newcommand{\bv}{\mathbf{v}}
\newcommand{\ba}{\mathbf{a}}
\newcommand{\bb}{\mathbf{b}}
\newcommand{\bA}{\mathbf{A}}
\newcommand{\bB}{\mathbf{B}}
\newcommand{\bu}{\mathbf{u}}
\newcommand{\bw}{\mathbf{w}}
\newcommand{\be}{\mathbf{e}}
\newcommand{\bKappa}{\mathbf{K}}
\newcommand{\bkappa}{\boldsymbol\kappa}
\newcommand{\wt}{\operatorname{wt}}
\newcommand{\Aut}{\operatorname{Aut}}
\newcommand{\Supp}{\operatorname{Supp}}
\newcommand{\rank}{\operatorname{rank}}
\newcommand{\kernel}{\operatorname{ker}}
\newcommand{\Gcd}{\operatorname{gcd}}
\newcommand{\HFP}{\operatorname{HFP}}
\newcommand{\op}{\operatorname{op}}
\title{Hadamard Full Propelinear Codes of type Q. Rank and Kernel\thanks{Submitted to \textit{Designs, Codes and Cryptography}}}
\author{J. Rif\`{a} \and
E. Su\'arez Canedo\thanks{J. Rif\`{a}, E. Su\'arez Canedo at Department of Information
	and Communications Engineering, Universitat Aut\`{o}noma de Barcelona,
	08193-Bellaterra, Spain 
}}
\begin{document}
	
\maketitle

\begin{abstract}
Hadamard full propelinear codes ($\HFP$-codes) are introduced and their equivalence with Hadamard groups is proven (on the other hand, it is already known the equivalence of Hadamard groups with relative $(4n,2,4n,2n)$-difference sets in a group and also with cocyclic Hadamard matrices).
We compute the available values for the rank and dimension of the kernel of $\HFP$-codes of type Q and we show that the dimension of the kernel is always 1 or $2$.
We also show that when the dimension of the kernel is 2 then the dimension of the kernel of the transposed code is 1 (so, both codes are not equivalent). Finally, we give a construction method such that from an $\HFP$-code of length $4n$, dimension of the kernel $k=2$, and maximum rank $r=2n$, we obtain an $\HFP$-code of double length $8n$, dimension of the kernel $k=2$, and maximum rank $r=4n$.

\textbf{Keywords:} {cocyclic Hadamard, Hadamard matrix, Hadamard group, kernel, propelinear code, rank, relative difference set}

\noindent \textbf{Mathematics Subject Classification (2010):} {MSC 5B \and MSC 5E \and 94B}
\end{abstract}

\section{Introduction}

Let $\F$ be the binary field. For any $v\in \F^n$, we define the support of $v$ as the set of nonzero positions of $v$ and we denote it by $\Supp(v)$.
%A vector $v\in \F^n$ \textit{covers} a vector $z\in \F^n$ if $\Supp(z) \subseteq \Supp(v)$.
Denote by $\wt(x)$ the \textit{Hamming weight} of a vector $x\in \F^n$ (i.e. the number
of its nonzero positions).
Given two vectors $x=(x_1,\ldots, x_n)$ and $y = (y_1, \ldots, y_n)$
from $\F^n$ we denote by $d(x,y)$ the \textit{Hamming distance} between $x$
and $y$ (i.e., the number of positions $i$, where $x_i \neq y_i$).
Let us denote by $e_i\in \F^n$ the vector with the value of all coordinates zero, except the $i$th which is one. A binary {\it code} $C$  of length $n$ is a nonempty subset of $\F^n$. The
elements of $C$ are called {\it codewords}.
Two structural properties of (nonlinear) codes are the dimension of the linear
span and kernel. The \textit{rank} of a binary code $C$, $r=\rank(C)$, is the dimension of the linear span of $C$. The \textit{kernel} of a binary code $C$ is the set of codewords which keeps the code invariant by translation, $K(C) := \{z \in \F^n : C + z = C\}$.
Assuming the zero vector is in $C$ we have that $K(C)$ is a linear subspace.
We denote the dimension of the kernel of $C$ by $k=\kernel(C)$.
For a binary code $C$, $\Aut(C)$ denotes the group of automorphisms of $C$, i.e.
the set of coordinate permutations that fixes $C$ setwise.

An \emph{Hadamard matrix} of order $n$ is a matrix of size $n \times n$ with entries $\pm 1$, such that $HH^T=n I$, where $I$ is the identity matrix and $H^T$ means the transpose matrix of $H$. When $n>1$ we can easily see that any two different rows (columns) of an Hadamard matrix agree in precisely $n/2$ coordinates and when $n>2$ any three different rows (columns) agree in precisely $n/4$ coordinates. Thus, for $n>2$, the order of an Hadamard matrix is always a multiple of 4. It is conjectured that the converse holds, i.e., there are Hadamard matrices of order $4n$, for any positive integer $n$ \cite{ak}.

Two \emph{Hadamard matrices} are \emph{equivalent} if one can be obtained from the other by permuting rows (or columns) or multiplying rows (or columns) by $-1$. With these operations we can change the first
row and column of $H$ into $+1$'s and we obtain an equivalent Hadamard matrix which is called \textit{normalized}. If $+1$'s are replaced by $0$'s and $-1$'s by $1$'s, the initial Hadamard matrix is changed into a (binary) Hadamard matrix and, from now on, we will refer to it when we deal with Hadamard matrices. The binary code consisting of the rows of a (binary) Hadamard matrix and their complements is called a (binary) \emph{Hadamard code}, which has $8n$ codewords, length $4n$ and minimum distance $2n$.

Elliott and Butson \cite{eb} define a \textit{relative $(v, m, k,\lambda)$-difference set in a group} $G$ relative
to a normal subgroup $N\lhd G$, where $|G| = vm$ and $|N| = m$, as a subset $D$ of $G$ with $|D|=k$ such that the multiset of values $d_1d_2^{-1}$
of distinct elements $d_1,d_2\in D$
contains each element of $G\backslash N$ exactly $\lambda$ times, and contains no elements of N. Thus $k(k-1) = \lambda m(v-1)$ and $v\not= 2k$. Equivalently,
$|D\cap xD| = \lambda$, for all $x\in G\backslash N$.

Let $D$ be a relative $(4n,2,4n,2n)$-difference set in a group $G$ of order $8n$ relative to a normal subgroup $N\simeq \Z_2$ of $G$. Such a group is called an \textit{Hadamard group} of order $8n$, in fact, a \textit{left Hadamard group} using the following fine-tuned definition from~\cite{ito}.

\begin{definition}\label{hg}
	A triple $(G,D,u)$ is a left Hadamard group of order $8n$ if $G$ is a finite group containing a prescribed $4n$-subset $D$ and a prescribed central involution $u$ ($D$ is called the Hadamard subset corresponding to $u$), such that
	\begin{enumerate}[(i)]
		%\item $D$ and $u D$ are disjoints and $D\cup u D = G$,
		\item $a D$ and $D$ intersect in exactly $2n$ elements, for any $a\notin \langle u\rangle\subset G$,
		\item $a D$ and $\{b,bu\}$ intersect in exactly one element, for any $a,b\in G$.
	\end{enumerate}
\end{definition}
Note that taking $a$ in (ii) as the identity element of $G$ we obtain that  $D$ and $u D$ are disjoints and $D\cup u D = G$.

A right Hadamard group $(G,D,u)$ can be characterized as a left Hadamard group over the opposite group $G^{\op}$ of $G$ (the opposite group $G^{\op}$ of $G$, has the same underlying set as $G$ and its group operation $\diamond$ is defined by $a\diamond b=b*a$).

From now on, when we use the term Hadamard group without any specification, we are referring to a left Hadamard group.

Hadamard matrices corresponding to Hadamard groups can also be obtained from $2$-cocycles~\cite{lfh,flan}.
The concept of cocyclic Hadamard matrix was introduced in~\cite{hl} and in~\cite{flan} it is proven that cocyclic Hadamard matrices are equivalent to Ito's Hadamard groups. In~\cite{itoIII}, a special Hadamard group was introduced, called type Q. In this case $G$ is a group of order $8n$, $G=\langle \ba, \bb : \ba^{4n}=e, \ba^{2n}=\bb^2, \bb^{-1}\ba\bb=\ba^{-1}\rangle$ with only one involution $u=\ba^{2n}=\bb^2$ which is central in $G$ and where $\be$ means the neutral element.

Hadamard conjecture asserts that an Hadamard matrix of order $4n$ exists for every positive integer $n$. The smallest order for which no Hadamard matrix is known is 668, and at the time of~\cite{hor} the smallest order for which no cocyclic Hadamard matrix is known is 188. Also, in~\cite{itoIII}, Ito conjectured that relative $(4n,2,4n,2n)$-difference sets in  groups of type Q exists for all positive integers $n$ and he shows it is true for $n\leq 11$. Later~\cite{sch} Ito's conjecture was verified for $n\leq 46$.

Let ${\cal S}_n$ denote the symmetric group of permutations of the set $\{1,\ldots, n\}$.  We write $\pi(v)$ to denote $(v_{\pi^{-1}(1)}, v_{\pi^{-1}(2)}, \ldots, v_{\pi^{-1}(n)})$, where $\pi \in {\cal S}_n$ and $v=(v_1,v_2,\ldots,v_n) \in \F^n$.

When we talk about a Hadamard group $G$ we use $e$, $u$ to refer the neutral element and the central involution in $G$, respectively. When we talk about binary codes we denote $\be, \bu$ the all zeros and the all ones vector, respectively. 
%Also, we denote by $e_i\in \Z_2^n$ the binary vector with zero in all positions, except the position $i$th, where the value is one. \label{def:ei}

\begin{definition}[\cite{rbh}]\label{def:propelinear}
	A binary code $C$ of length $n$, such that $\be \in C$, has a  \textbf{propelinear} structure if for each codeword
	$x\in C$ there exists $\pi_x\in \cS_n$ satisfying the following conditions:
	\begin{enumerate}
		\item For all $x,y\in C$, $x+\pi_x(y)\in C$,
		\item For all $x,y\in C$, $\pi_x\pi_y=\pi_z$, where $z = x +\pi_x(y)$.
	\end{enumerate}
\end{definition}

For all $x\in C$ and for all $y\in \Z_2^n$, denote by $\cdot$ the binary operation
such that $x\cdot y=x+\pi_x(y)$. Then, $(C,\cdot)$ is a group, which is not
abelian in general. The zero vector $\be$ is the identity element and $\pi_{\be}=I$ is the
identity permutation. Moreover, $x^{-1}=\pi_x^{-1}(x)$, for all $x\in C$~\cite{rbh}. We call $C$ a propelinear code if it has a propelinear structure.

The current paper is focused on the study and computation of the available parameters for the values of the rank and the dimension of the kernel of Hadamard codes corresponding to Hadamard groups of type Q.
In \Cref{sec:2} we introduce the $\HFP$-codes (Hadamard full propelinear codes) and we show they are equivalent to Hadamard groups. We also show some properties for the kernel of these codes.
In \Cref{sec:3} we introduce the concept of $\HFP$-codes of type Q, which correspond to Hadamard groups of type Q. We study the available values for the rank and the dimension of the kernel of $\HFP$-codes of type Q and we show that the dimension of the kernel is always 1 or $2$.
In \Cref{sec:4} we characterize $\HFP$-codes with dimension of the kernel $k=2$ and we show that the transposed matrix of an Hadamard matrix of type Q with $k=2$ has the dimension of the kernel equal to 1, so both Hadamard matrices are not equivalent.
Finally, in \Cref{sec:5}, we give a construction such that from an $\HFP$-code of type Q, $k=2$ and length $4n$, we obtain a new $\HFP$-code of type Q, $k=2$ and length $8n$ and we show that when the former code has maximum rank $r=2n$ then the constructed code of double length has also maximum rank.

\section{Hadamard Full Propelinear Co\-des}\label{sec:2}
We call $(C,\cdot)$ a propelinear Hadamard code if $C$ is both, a propelinear code and an Hadamard code.
%In this case, the matrix with all rows the vectors of $C$ is a binary Hadamard matrix.
In this section we introduce the concept of Hadamard full propelinear code and we show that it is equivalent to the well known concepts of Hadamard group, 2-cocyclic matrix and relative difference set.

%A preliminary incomplete version of almost all results in this section can be seen in \cite{bbr,rs}.

\begin{definition}[\cite{rs}]
	An $\HFP$-code (Hadamard full propelinear code) is an Hadamard propelinear code $C$ such that for every $a\in C$, $a\not=\be, a\not=\bu$ the permutation $\pi_a$ has not any fixed coordinate and $\pi_\be=\pi_\bu=I$.
\end{definition}
\begin{lemma}\label{lemm:central}
	In an $\HFP$-code $(C,\cdot )$ the vector $\bu$ is a central involution in $C$.
\end{lemma}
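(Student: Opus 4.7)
The plan is to unfold both assertions (involution and centrality) directly from the definition of the operation $\cdot$ and the defining properties of an $\HFP$-code, together with the fact that $\bu$ is fixed by every coordinate permutation.

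First I would verify that $\bu \in C$: since $C$ is an Hadamard code it is closed under complementation, and $\be \in C$ by the propelinear convention, so $\bu = \be + \bu \in C$. Then, using $\pi_\bu = I$ (which is built into the $\HFP$ definition), the binary operation gives
\[
\bu \cdot \bu \;=\; \bu + \pi_\bu(\bu) \;=\; \bu + \bu \;=\; \be,
\]
so $\bu$ has order two in $(C,\cdot)$, i.e.\ it is an involution.

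Next I would check centrality. For any $x \in C$, on the one hand $\bu \cdot x = \bu + \pi_\bu(x) = \bu + x$. On the other hand, the key observation is that every permutation $\sigma \in \cS_n$ fixes the all-ones vector: $\sigma(\bu) = \bu$. Therefore
\[
x \cdot \bu \;=\; x + \pi_x(\bu) \;=\; x + \bu \;=\; \bu + x \;=\; \bu \cdot x,
\]
and $\bu$ commutes with every codeword.

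There is essentially no obstacle here: the statement is a direct consequence of $\pi_\bu = I$ (from the $\HFP$ definition) plus the trivial fact that the all-ones vector is invariant under any coordinate permutation. The only point that deserves a brief mention is the membership $\bu \in C$, which is guaranteed because an Hadamard code contains each codeword together with its complement.
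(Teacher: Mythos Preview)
Your proof is correct and follows essentially the same route as the paper: compute $\bu\cdot\bu=\bu+\pi_\bu(\bu)=\be$ and, for centrality, compare $x\cdot\bu=x+\pi_x(\bu)=x+\bu$ with $\bu\cdot x=\bu+\pi_\bu(x)=\bu+x$. Your added remark that $\bu\in C$ and that any coordinate permutation fixes $\bu$ just makes explicit what the paper leaves implicit.
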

\begin{proof}
	For any vector $a\in C$ we have $a\bu=a+\pi_a(\bu)=a+\bu=\bu+\pi_\bu(a)=\bu a$, so vector $\bu$ is central in $C$. Also $\bu^2=\bu+\pi_{\bu}(\bu)=\bu+\bu=\be$. \qed
\end{proof}

\begin{remark} \label{remark:1} ~\\
	Let $(C,\cdot )$ be an $\HFP$-code of length $4n$ and let $D_1$ be the set of codewords with a zero in the first coordinate. The cardinality of $D_1$ is $4n$ and we can use $D_1$ as an indexing set for the coordinates of the elements in $C$.
	We will say that $x\in D_1$ is indexing the $i$th coordinate when
	\begin{equation}\label{indexing}
	\mbox{$e_1=\pi_x(e_i)$,}
	\end{equation}
	where $e_i$ is defined in Introduction. Note that for any $i\in \{1,\ldots, 4n\}$ there is one and only one element $x\in D_1$ such that $e_1=\pi_x(e_i)$ (otherwise, code $C$ would not be full propelinear).
	
	The value of the coordinate indexed by $x\in D_1$ in a vector $a\in C$ is zero or one, depending on the value of the first coordinate of $\pi_x(a)$. As $x\in D_1$, the value of the first coordinate of $\pi_x(a)$ is the same as in $x+\pi_x(a)=xa$. Hence, let  $\delta_{(xa)}\in \{\be,\bu\}$ be such that $\delta_{(xa)}xa\in D_1$, then the value of the coordinate indexed by $x\in D_1$ in vector $a\in C$ is given by $\gamma_{(xa)}\in \Z_2$, where $\gamma_{(xa)}=0$ if and only if $\delta_{(xa)}=\be$. \qed
\end{remark}

A previous version of the next statement was proved in \cite{rs} but we include a new proof here.
\begin{proposition}\label{lemm:2.3}
	Let $(C,\cdot )$ be an $\HFP$-code of length $4n$ and let $D_1$ be the set of codewords with a zero in the first coordinate. Then the triple $(C,D_1,\bu)$ is a left Hadamard group.
\end{proposition}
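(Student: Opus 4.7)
My plan is to check the three structural requirements of \Cref{hg}: the cardinalities, the centrality of $\bu$, and the two intersection conditions. The easy part is that $(C,\cdot)$ is a group of order $8n$ (a Hadamard code of length $4n$ has $8n$ codewords). Since $\pi_\bu=I$ and $\pi_b(\bu)=\bu$ for every $b\in C$ (any permutation fixes the all-ones vector), we have $b\bu = b+\bu$; hence $b\mapsto b\bu$ flips the first coordinate, partitioning $C$ into $4n$ pairs $\{c,c\bu\}$, each meeting $D_1$ in exactly one element, so $|D_1|=4n$. \Cref{lemm:central} gives that $\bu$ is a central involution, hence $\langle \bu\rangle=\{\be,\bu\}$.

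For condition~(ii) I will fix $a,b\in C$ and set $c=a^{-1}b\in C$. Associativity in $(C,\cdot)$ together with the centrality of $\bu$ yield $b=ac$ and $b\bu = a(c\bu)$. Since exactly one of $c,c\bu$ lies in $D_1$ (by the partition above), exactly one of $b, b\bu$ lies in $aD_1$, which is the required statement.

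For condition~(i), fix $a\in C\setminus\{\be,\bu\}$ and count $x\in D_1$ with $ax\in D_1$. I compute
\[
(ax)_1 \;=\; \bigl(a+\pi_a(x)\bigr)_1 \;=\; a_1 + x_{\pi_a^{-1}(1)},
\]
so, setting $i=\pi_a^{-1}(1)$, the condition becomes $x_i=a_1$. Here the full propelinear hypothesis enters decisively: since $a\neq \be,\bu$, the permutation $\pi_a$ fixes no coordinate, so $i\neq 1$. I then invoke the pairwise column balance of a binary Hadamard code containing $\bu$: for any two distinct coordinates, each of the four patterns in $\{0,1\}^2$ occurs in exactly $2n$ codewords (orthogonality of any two $\pm 1$ columns plus closure under complementation). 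Applied to coordinates $1$ and $i$, this gives exactly $2n$ codewords with $x_1=0$ and $x_i=a_1$, all of which lie in $D_1$, so $|aD_1\cap D_1|=2n$.

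The only delicate step is the use of the no-fixed-coordinate hypothesis in~(i): without $i\neq 1$ the count would degenerate to $4n$ (all of $D_1$), so this is precisely where the \emph{full} in HFP is irreplaceable, and it is the one place where the propelinear and Hadamard structures genuinely interact. Everything else is a direct translation between the group operation $\cdot$ and elementary Hadamard combinatorics.
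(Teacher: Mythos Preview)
Your proof is correct and follows essentially the same approach as the paper: both reduce condition~(i) to the Hadamard column-balance fact via the no-fixed-coordinate hypothesis, the only cosmetic difference being that the paper identifies $xD_1$ wholesale with $D_k$ or $\bu D_k$ for $k=\pi_x(1)$, whereas you compute the first coordinate of $ax$ directly and read off the condition $x_i=a_1$ with $i=\pi_a^{-1}(1)$. Your write-up is in fact more complete, since you explicitly verify $|D_1|=4n$ and condition~(ii), which the paper leaves implicit.
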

\begin{proof}
	Since $C$ is an $\HFP$-code $\pi_x(e_1)\not=e_1$, except for $x\in\{\be,\bu\}$. We have that $|D_1\cap xD_1|=4n$ if and only if $x=\be$, and  $|D_1\cap xD_1|=0$ if and only if $x=\bu$. For the other cases $\pi_x(e_1)=e_k$, where $e_1\not=e_k$ and $xD_1$ is either $D_k$ or $\bu D_k$, depending on $x\in D_k$ or $x\notin D_k$, respectively, where $D_k$ is the set of codewords with a zero in the $k$th position. In any case $|D_1\cap xD_1|=2n$.
	This proves the conditions in \Cref{hg}. \qed
\end{proof}

Vice versa, after Proposition~\ref{lemm:2.3}, the next proposition shows that starting from a left Hadamard group $(G,D,u)$ we can construct an $\HFP$-code, isomorphic to $G$ as a group, with $D$ corresponding to the set of codewords with a zero in the first position and $u$ corresponding to the all ones vector. A previous version of this result could be seen in \cite{rs}.
\begin{proposition}\label{hgtohfp}
	Let $(G,*)$ be an Hadamard group of order $8n$ with $D$ as the prescribed Hadamard subset corresponding to a central involution $u$. Then we can construct an $\HFP$-code $C$, isomorphic to $G$ as a group. This group isomorphism $\sigma: G\rightarrow C$ is such that $\sigma(D)=D_1$ and $\sigma(u)=\bu$, where $D_1$ is the set of codewords of $C$ with a zero in the first position corresponds and $\bu$ is the all ones vector.
\end{proposition}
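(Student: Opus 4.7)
The plan is to construct $C$ and its propelinear permutations explicitly from the Hadamard group data. After replacing $D$ by $uD$ if necessary (still an Hadamard subset corresponding to $u$: centrality of $u$ makes conditions (i) and (ii) of \Cref{hg} invariant under this swap), I may assume $e \in D$. Index the $4n$ coordinates of $\F^{4n}$ by the elements of $D$, with $e \in D$ placed in the first position. Define $\sigma : G \to \F^{4n}$ by $\sigma(g)_d = 0$ iff $d*g \in D$, and define $\pi_g \in \cS_{4n}$ by letting $\pi_g(p)$ be the unique element of $D$ in the coset $pg^{-1}\langle u\rangle$, which is well defined because $D$ is a transversal of $\langle u\rangle$.

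The identities $\sigma(e) = \be$, $\sigma(u) = \bu$, $\sigma(D) = D_1$, $\pi_e = \pi_u = I$, and fixed-point-freeness of $\pi_g$ for $g \notin \langle u\rangle$ all follow directly from the definitions together with the centrality of $u$. To check the propelinear axioms, let $\alpha = \sigma(g)_d$ and $p = \pi_g^{-1}(d) \in D$, so that $d*g = u^\alpha p$; writing $\beta = \sigma(h)_p$ and $p*h = u^\beta q$ with $q \in D$ yields $d*g*h = u^{\alpha+\beta}q$, and reading off bits gives $\sigma(gh)_d = \sigma(g)_d + \sigma(h)_{\pi_g^{-1}(d)}$. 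The composition law $\pi_{gh} = \pi_g\pi_h$ is immediate from chaining the coset descriptions. Hence $\sigma$ is a group isomorphism from $(G,*)$ onto $(C,\cdot)$ with the requested normalizations $\sigma(D) = D_1$ and $\sigma(u) = \bu$.

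The hard part will be verifying that $C$ is actually an Hadamard code, i.e., that any two distinct codewords lie at distance $2n$ or $4n$. Propelinearity reduces this to showing $\wt(\sigma(c)) = 2n$ for every $c \in G \setminus \langle u\rangle$, because $d(\sigma(g),\sigma(h)) = \wt(\pi_g(\sigma(g^{-1}h))) = \wt(\sigma(g^{-1}h))$. A direct count gives $\wt(\sigma(c)) = 4n - |D \cap Dc^{-1}|$, and $|D \cap Dc^{-1}|$ equals the coefficient of $c$ in the group-ring element $D^{(-1)}D$. The relative difference set identity reads $DD^{(-1)} = 4n\,e + 2n(G - \langle u\rangle)$; since $u$ is central, Schur's lemma forces $\rho(u) = \pm I$ on every irreducible representation $\rho$ of $G$, and applying $\rho$ to this identity yields $\rho(D) = 0$ when $\rho(u) = I$ and $\rho(D)\rho(D)^{\ast} = 4nI$ when $\rho(u) = -I$. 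In either case $\rho(D)^{\ast}\rho(D) = \rho(D)\rho(D)^{\ast}$, so by Wedderburn decomposition $D^{(-1)}D = DD^{(-1)}$, and reading off the coefficient of $c \notin \langle u\rangle$ gives $2n$ as required. Alternatively one may simply invoke the equivalence between Hadamard groups and cocyclic Hadamard matrices cited earlier to obtain the underlying Hadamard matrix and hence the code structure on $C$.
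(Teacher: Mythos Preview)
Your construction of $\sigma$ and the permutations $\pi_g$, and your verification of the propelinear axioms, are essentially identical to the paper's (the paper phrases the permutation as sending coordinate $b$ to $\delta_{(b*a^{-1})}b*a^{-1}$, which is exactly your ``unique element of $D$ in $ba^{-1}\langle u\rangle$''). The genuine divergence is in the Hadamard verification. You compute weights of \emph{rows} $\sigma(c)$, arriving at $\wt(\sigma(c))=4n-|D\cap Dc^{-1}|$; this needs the \emph{right} Hadamard property $|D\cap Dc|=2n$, which you then derive from the left one by a group-ring/representation argument ($\rho(D)$ is zero or a scalar multiple of a unitary on each irreducible, hence normal, so $D^{(-1)}D=DD^{(-1)}$). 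The paper instead counts agreements between two \emph{columns} of $H$ indexed by $b,c\in D$: they agree at row $a$ iff $a$ or $ua$ lies in $b^{-1}D\cap c^{-1}D$, and since $D\cup uD=G$ this count is exactly $|cb^{-1}D\cap D|=2n$ by the left property directly---no representation theory needed. Your argument is correct and has the bonus of proving left $\Leftrightarrow$ right Hadamard independently (which the paper only obtains later, in \Cref{prop:3} and \Cref{coro:invers}, by bootstrapping through the already-constructed $\HFP$-code); the paper's argument is more elementary and self-contained for the purpose at hand.
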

\begin{proof}
	We can assume the identity element $e$ of $G$ is in $D$, otherwise we take as $D$ the set $u*D$.
	From $G$ we can construct a $4n\times 4n$ Hadamard matrix $H$, where the columns are indexed by the elements of $D$.
	Order the columns in such a way that $e\in D$ is indexing the first column.
	For $a,b\in D$, the entry $(a,b)$ of $H$ is $0$ if $b*a\in D$ and $1$ if $b*a\notin D$. We will say that the entry $(a,b)$ of $H$ is $\gamma_{(b*a)}\in \Z_2$, where $\gamma_{(b*a)}=0$ if and only if $\delta_{(b*a)}=e$. The value of $\delta_{(b*a)}\in \{e,u\}$ is such that $\delta_{(b*a)}*b*a\in D$. For a fixed $a\in D$, the vector with entries $(a,b)$, for $b\in D$ will be the corresponding codeword $\sigma(a)$ in the code $C$ we are constructing. The first coordinate of codeword $\sigma(a)$ is zero.
	
	First of all we show that $H^T$ is an Hadamard matrix and so $H$ is an Hadamard matrix too. Take two columns of $H$, indexed by $b,c\in D$, respectively.
	These two columns have the same value in the row position corresponding to codeword $\sigma(a)$ if and only if $\delta_{(b*a)}=\delta_{(c*a)}$ or, the same, if and only if either $a\in b^{-1}*D \cap c^{-1}*D$ or $a*u \in b^{-1}*D \cap c^{-1}*D$. Hence, since $D$ is an Hadamard subset, there are $|b^{-1}*D \cap c^{-1}*D|=|c*b^{-1}*D \cap D|=2n$ positions where the two different columns indexed by $b,c\in D$ coincide. As $e\in D$ then $H$ is a normalized Hadamard matrix.
	
	Now, we construct a full propelinear code $C$. The codewords of $C$ are the rows (and the complements) of matrix $H$. For any $a\in G$, $\sigma(a)$ will be the corresponding row (or the complement) of $H$ constructed from $a$. Obviously, $\sigma(e)=\be$ and $\sigma(u)=\bu$.
	
	For any $\sigma(a)\in C$ its coordinates are indexed by the elements in $D$ and we define a map $\pi_{\sigma(a)}: C \longrightarrow \F^n$ by
	$$
	\pi_{\sigma(a)}(\sigma(x))=\sigma(a)+\sigma(a*x),\,\mbox{for any }\, \sigma(x)\in C,
	$$
	where the operation $+$ is the componentwise addition in $\F^n$.
	The map $\pi_{\sigma(a)}$ acts as a permutation on $D$. Specifically, the coordinate given by $b\in D$ is moved to the coordinate given by $\delta_{(b*a^{-1})}*b*a^{-1}\in D$.
	Indeed, take the element $x\in D$ to show that the value of the coordinate indexed by $b\in D$ in $\sigma(x)$ coincides with the value of the coordinate indexed by $\delta_{(b*a^{-1})}*b*a^{-1}\in D$ in $\pi_{\sigma(a)}(\sigma(x))$. The value of the coordinate indexed by $b$ in $\sigma(x)$ is $\gamma_{(b*x)}$. The value of the coordinate indexed by $\delta_{(b*a^{-1})}*b*a^{-1}$ in $\sigma(a)$ is $\gamma_{(b*a^{-1})}+\gamma_{b}=\gamma_{(b*a^{-1})}$ ($\gamma_{b}=0$ since $b\in D$). On the other hand, the value of the coordinate indexed by $\delta_{(b*a^{-1})}*b*a^{-1}$ in $\sigma(a*x)$ is $\gamma_{(b*a^{-1})}+\gamma_{(b*x)}$. So, we should check that  $\gamma_{(b*x)}= \gamma_{(b*a^{-1})}+ \gamma_{(b*a^{-1})}+\gamma_{(b*x)}$, which is obvious.
	Therefore, $\pi_{\sigma(a)}$ acts as a permutation on the set of coordinates.
	
	For any $\sigma(a), \sigma(b)\in C$ we define $\sigma(a)\sigma(b)=\sigma(a)+\pi_{\sigma(a)}(\sigma(b))=\sigma(a*b)$ which gives a propelinear structure on $C$. Indeed, the conditions in \Cref{def:propelinear} are fulfilled. The first one is straightforward. For the second condition we want to prove that, for all $\sigma(x),\sigma(y) \in C$, we have $\pi_{\sigma(x)}\pi_{\sigma(y)}=\pi_{\sigma(x)\sigma(y)}$. Take any $\sigma(z)\in C$ and compute $\pi_{\sigma(x)\sigma(y)}(\sigma(z))=\pi_{\sigma(x*y)}(\sigma(z))=\sigma(x*y)+\sigma((x*y)*z)= \sigma(x*y)+\sigma(x*(y*z))= \sigma(x) + \pi_{\sigma(x)}(\sigma(y))+\sigma(x)+\pi_{\sigma(x)}(\sigma(y*z))=\pi_{\sigma(x)}(\sigma(y)+\sigma(y*z))=\pi_{\sigma(x)}(\pi_{\sigma(y)}(\sigma(z)))$.
	This proves the statement.\qed
\end{proof}

\begin{remark}~\\
	These two last propositions prove the equivalence of both concepts, Hadamard groups $(G,D,u)$ and $\HFP$-codes. If we begin with an $\HFP$-code $(C,\cdot)$ we can consider the Hadamard group that such code gives rise to (\Cref{lemm:2.3}) and then, from the Hadamard group we can construct an $\HFP$-code which coincides with the initial one (\Cref{hgtohfp} and \Cref{remark:1}). Vice versa, starting from an Hadamard group $(G,D,u)$ we can construct an $\HFP$-code (which is isomorphic to $G$ as a group), with $D_1=\sigma(D)$ as the set of codewords with a zero in the first position and $\bu=\sigma(u)$ as the all ones vector. This $\HFP$-code coincides with the Hadamard group $(G,D,u)$ (\Cref{lemm:2.3}).
	%Also note that the matrix we construct in \Cref{hgtohfp} coincides with the matrix we already constructed for an $\HFP$-code in \Cref{remark:1}.
	\qed
\end{remark}

It would seem, on the surface, that only left Hadamard groups are in one-to-one correspondence with $\HFP$-codes. Next proposition shows that the correspondence is also for right Hadamard groups.

\begin{proposition}\label{prop:3}
	Let $(C,\cdot )$ be an $\HFP$-code of length $4n$ and let $D_1$ be the set of codewords with a zero in the first coordinate. Then the triple $(C,D_1,\bu)$ is a right Hadamard group.
\end{proposition}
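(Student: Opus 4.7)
The plan is to verify the two conditions of \Cref{hg} in the opposite group $(C^{\op},\diamond)$, with $a\diamond b := b\cdot a$. Left multiplication in $C^{\op}$ by $a$ is right multiplication by $a$ in $C$, so $a\diamond D_1 = D_1\cdot a$; and because $\bu$ is central in $C$ by \Cref{lemm:central}, the set $\{b, b\diamond \bu\}$ coincides with $\{b, b\bu\}$. Hence what I must show is
\begin{enumerate}[(i)]
\item $|D_1 a \cap D_1| = 2n$ for every $a\notin\{\be,\bu\}$;
\item $|D_1 a \cap \{b, b\bu\}| = 1$ for every $a,b\in C$.
\end{enumerate}

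For (ii), I would reduce it to a transversal property already contained in \Cref{lemm:2.3}. Condition~(ii) of \Cref{hg} applied to the left Hadamard group $(C,D_1,\bu)$ with its ``$a$'' set to $\be$ gives $|D_1 \cap \{c, c\bu\}| = 1$ for every $c\in C$; specialising to $c = b a^{-1}$ and using centrality of $\bu$ to rewrite $b\bu a^{-1} = ba^{-1}\bu$, exactly one of $ba^{-1}, ba^{-1}\bu$ lies in $D_1$, which is the same as saying that exactly one of $b, b\bu$ lies in $D_1 a$.

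For (i), the essential step is a direct count. For $d\in D_1$ the codeword $d\cdot a = d+\pi_d(a)$ has first coordinate $d_1 + (\pi_d(a))_1 = a_{\pi_d^{-1}(1)}$, so $d\cdot a \in D_1$ if and only if $a_{\pi_d^{-1}(1)}=0$. By \Cref{remark:1}, the assignment $d\mapsto \pi_d^{-1}(1)$ is a bijection $D_1 \to \{1,\dots,4n\}$, hence
$$
|D_1 a \cap D_1| \;=\; |\{\,i\in\{1,\dots,4n\} : a_i=0\,\}| \;=\; 4n-\wt(a).
$$
Since $C$ is an Hadamard code, every codeword $a\neq \be,\bu$ has weight exactly $2n$, and (i) follows at once.

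There is no serious obstacle; the whole statement collapses once one observes that \Cref{remark:1} converts the cardinality $|D_1 a\cap D_1|$ into a weight count of $a$ itself, after which the well-known weight spectrum of a binary Hadamard code finishes the job. The only subtlety worth being careful about is the translation between the $C^{\op}$ formulation and the $C$ formulation, which is handled cleanly by the centrality of $\bu$.
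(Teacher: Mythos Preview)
Your proof is correct and follows essentially the same line as the paper's. Both arguments hinge on the bijection $d\mapsto \pi_d^{-1}(1)$ from $D_1$ to $\{1,\dots,4n\}$ (the paper re-derives it in the proof, you cite \Cref{remark:1}) and then observe that the first coordinate of $d\cdot a$ equals $a_{\pi_d^{-1}(1)}$, so $|D_1 a\cap D_1|=4n-\wt(a)=2n$; the paper dispatches condition~(ii) as ``obvious'' while you spell out the transversal argument via \Cref{lemm:2.3}, but the substance is identical.
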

\begin{proof}
	To show that $(C,D_1,\bu)$ is a right Hadamard group note that using all $a\in D_1$ the value of $\pi_a^{-1}(e_1)$ gives all $e_i$, for $i\in \{1,\ldots,4n\}$, otherwise the code would not be full propelinear.
	Indeed, if $\pi_a^{-1}(e_1)=\pi_b^{-1}(e_1)$, for $a,b\in D_1$, with $a\not= b$ then $\pi_{ab^{-1}}(e_1)=e_1$ which contradicts that $C$ is $\HFP$.
	Vectors in $D_1 \cap D_1x$ are those in $\{a+\pi_a(x) : a\in D_1\}$ with a zero in the first coordinate, which coincides with those in $\{\pi_a(x) : a\in D_1\}$ with a zero in the first coordinate. The value of the first position in $\pi_a(x)$ is the value on the $i$th position of vector $x$, knowing that $\pi_a^{-1}(e_1)=e_i$. When we take all $a\in D_1$ the value of the first position in $\pi_a(x)$ take all values given by all positions of vector $x\notin \{\be,\bu\}$, so half zeros and half ones. Hence, for $x\notin \{\be,\bu\}$ we have $|D_1 \cap D_1 x|=2n$. This proves the first condition in \Cref{hg}. The second condition is obvious.
	\qed
\end{proof}

\begin{corollary}\label{coro:invers}
	If $(G,D,u)$ is a left Hadamard group then $(G,D,u)$ is a right Hadamard group; $(G,D^{-1},u)$ is a left Hadamard group and $(G^{\op},D,u)$ is a left Hadamard group.
\end{corollary}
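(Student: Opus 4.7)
The plan is to derive all three claims from \Cref{hgtohfp} and \Cref{prop:3}, together with elementary bookkeeping via inverses and the passage to the opposite group, so that essentially no new combinatorial content is needed.

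First I would prove that $(G,D,u)$ is a right Hadamard group. Starting from the left Hadamard group $(G,D,u)$, \Cref{hgtohfp} produces an $\HFP$-code $(C,\cdot)$ together with a group isomorphism $\sigma:G\to C$ carrying $D$ onto the set $D_1$ of codewords with a zero in the first coordinate and $u$ onto $\bu$. \Cref{prop:3} then says that $(C,D_1,\bu)$ is a right Hadamard group, and transporting this structure back through $\sigma^{-1}$ yields the right Hadamard property for $(G,D,u)$.

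The claim that $(G^{\op},D,u)$ is a left Hadamard group is then automatic: the paper recalls that a right Hadamard group on $G$ is, by definition, a left Hadamard group on $G^{\op}$, so this is just a restatement of the previous step. Finally, for $(G,D^{-1},u)$ I would verify the two conditions of \Cref{hg} directly by inverting. For (i), the right Hadamard property just established gives $|Dc\cap D|=2n$ for every $c\notin\langle u\rangle$; applying the bijection $g\mapsto g^{-1}$, which sends $G\setminus\langle u\rangle$ onto itself because $u^{-1}=u$, this becomes $|c^{-1}D^{-1}\cap D^{-1}|=2n$, i.e. $|aD^{-1}\cap D^{-1}|=2n$ for every $a\notin\langle u\rangle$. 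For (ii), from the right-Hadamard form $|Dc\cap\{d,du\}|=1$ valid for all $c,d\in G$, inversion together with the centrality of $u$ (so that $(du)^{-1}=d^{-1}u$) gives $|c^{-1}D^{-1}\cap\{d^{-1},d^{-1}u\}|=1$ for all $c,d$, which is exactly condition (ii) for $(G,D^{-1},u)$.

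The main obstacle is essentially notational: once \Cref{hgtohfp} and \Cref{prop:3} are in hand, each of the three claims reduces either to transporting a Hadamard group structure along the isomorphism $\sigma$ or to the inversion map. The only subtlety worth mentioning is making sure that the central involution and its normal subgroup $\langle u\rangle$ are preserved under $g\mapsto g^{-1}$, but this is immediate from $u^{-1}=u$ and the centrality of $u$, so nothing serious arises.
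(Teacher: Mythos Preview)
Your proof is correct and follows essentially the same route as the paper. The only cosmetic difference is in the last claim: the paper packages your direct verification as a single transport along the group isomorphism $\phi:G^{\op}\to G$, $\phi(g)=g^{-1}$, which sends the left Hadamard group $(G^{\op},D,u)$ to $(G,D^{-1},u)$; your explicit check of conditions (i) and (ii) via inversion is exactly what that isomorphism unfolds to.
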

\begin{proof}
	Let $(G,D,u)$ be a left Hadamard group. From \Cref{prop:3,hgtohfp} we have that $(G,D,u)$ is also a right Hadamard group. Now, $(G^{\op},D,u)$ is a left Hadamard group and using the isomorphism $\phi:G^{\op} \rightarrow G$ such that $\phi(g)=g^{-1}$ we obtain that $(G,D^{-1},u)$ is also a left Hadamard group. \qed
\end{proof}

It is worth to mention that in \cite{itoremarks} it was proved  that if $(G,D,u)$ is a left Hadamard group then $(G,D^{-1},u)$ is also a left Hadamard group, considering the group ring of $G$ over the field of complex numbers.

Let $(G,D,u)$ be an Hadamard group. Following \Cref{hgtohfp} construct the associated $\HFP$-codes $C, E$ to $(G,D,u)$ and $(G^{\op},D,u)$, respectively. It is easy to see that the corresponding normalized Hadamard matrices of codes $C, E$ are transpose with one another. We will say that the $\HFP$-code $E$ is the \textit{transpose code} of $C$.

\begin{corollary}\label{coro:trans}
	Let $C$ be an $\HFP$-code of length $4n$ and $(C,D_1,\bu)$ the corresponding Hadamard group (\Cref{lemm:2.3}). The transpose $\HFP$-code of $C$ has $(C,D_1^{-1},\bu)$ as Hadamard group.
\end{corollary}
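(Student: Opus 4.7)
The plan is simply to chain together the correspondences already established in \Cref{lemm:2.3}, \Cref{hgtohfp} and \Cref{coro:invers}, and to record which isomorphism carries which Hadamard subset onto which. By the definition given just before the corollary, the transpose $\HFP$-code $E$ is obtained by applying the construction of \Cref{hgtohfp} to the Hadamard group $(C^{\op}, D_1, \bu)$, where $C^{\op}$ denotes the opposite group of $(C,\cdot)$ (the existence of this left Hadamard group is guaranteed by \Cref{coro:invers}). Applying \Cref{lemm:2.3} to $E$ then yields its canonical Hadamard group $(E, D_1^E, \bu)$, where $D_1^E$ is the set of codewords of $E$ with a zero in the first coordinate.

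The key step is to observe that the construction in \Cref{hgtohfp} supplies a group isomorphism $\sigma\colon C^{\op} \to E$ with $\sigma(D_1) = D_1^E$ and $\sigma(u) = \bu$, so that $(E, D_1^E, \bu) \cong (C^{\op}, D_1, \bu)$ as Hadamard groups. It then remains to identify $(C^{\op}, D_1, \bu)$ with $(C, D_1^{-1}, \bu)$, which is the very identification performed in the proof of \Cref{coro:invers}: the map $\phi \colon C^{\op} \to C$, $\phi(g) = g^{-1}$, is a group isomorphism, and it sends $D_1$ setwise to $D_1^{-1}$ while fixing $\bu$ (since $\bu$ is an involution). Composing $\phi \circ \sigma^{-1}$ gives an isomorphism of Hadamard groups from $(E, D_1^E, \bu)$ to $(C, D_1^{-1}, \bu)$, which is exactly the claim.

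No substantive obstacle is expected here: all the non-trivial content, namely that passing to the opposite group corresponds to inverting the Hadamard subset, has already been packaged in \Cref{coro:invers}, and the only checks needed are the purely formal ones that $\phi(D_1) = D_1^{-1}$ and $\phi(\bu) = \bu$. The corollary is therefore essentially a bookkeeping statement tying together the preceding machinery.
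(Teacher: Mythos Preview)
Your argument is correct and matches the paper's approach: the paper's proof simply reads ``Straightforward from the proof of \Cref{coro:invers},'' and you have unpacked exactly that, using the isomorphism $\phi\colon C^{\op}\to C$, $g\mapsto g^{-1}$, to carry $(C^{\op},D_1,\bu)$ to $(C,D_1^{-1},\bu)$. Your write-up is more explicit than the paper's one-liner but follows the same route.
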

\begin{proof}
	Straightforward from the proof of \Cref{coro:invers}.
\end{proof}
\begin{proposition}[\cite{bbr}]
	Let $C$ be an $\HFP$-code of length $4n$. Set $\Pi =\{\pi_x : x\in C\}$. Then $\bu\in K(C)$ and  $\Pi$ is isomorphic to $C/\langle \bu \rangle$.
\end{proposition}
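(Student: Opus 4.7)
The plan is to establish the two assertions separately, with both following rather directly from results and definitions already in place.

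For the statement $\bu\in K(C)$, I would argue that $\bu$ being in the kernel is equivalent to $C+\bu=C$. Since $\pi_x$ is a coordinate permutation for every $x\in C$ and $\bu$ is the all-ones vector, we have $\pi_x(\bu)=\bu$. Combining this with centrality of $\bu$ established in \Cref{lemm:central}, for any $x\in C$,
\[
x+\bu=x+\pi_x(\bu)=x\cdot \bu\in C,
\]
so $C+\bu\subseteq C$, and equality follows because $\bu$ is an involution. Hence $\bu\in K(C)$ and, since $K(C)$ is a linear subspace containing $\be$, the cyclic group $\langle\bu\rangle=\{\be,\bu\}$ is contained in $K(C)$.

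For the isomorphism $\Pi\cong C/\langle\bu\rangle$, the natural candidate is the map $\phi:C\to\Pi$ defined by $\phi(x)=\pi_x$. Surjectivity is immediate from the definition of $\Pi$. The second axiom of \Cref{def:propelinear} gives $\pi_x\pi_y=\pi_{x\cdot y}$, so $\phi$ is a group homomorphism from $(C,\cdot)$ to $\Pi$.

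It remains to identify the kernel. By the very definition of an $\HFP$-code, $\pi_\be=\pi_\bu=I$, so $\langle\bu\rangle\subseteq\ker\phi$. Conversely, if $\phi(x)=I$, then $\pi_x$ is the identity permutation and in particular fixes every coordinate; the $\HFP$ hypothesis that $\pi_a$ has no fixed coordinate for $a\notin\{\be,\bu\}$ forces $x\in\{\be,\bu\}=\langle\bu\rangle$. Thus $\ker\phi=\langle\bu\rangle$, and the first isomorphism theorem yields $\Pi\cong C/\langle\bu\rangle$.

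There is no genuine obstacle here; the whole argument is a direct unpacking of the $\HFP$ definition, \Cref{def:propelinear}, and \Cref{lemm:central}. The only point that requires a little care is the distinction between the group operation ``$\cdot$'' of the propelinear code and componentwise addition ``$+$'' when verifying $x+\bu=x\cdot\bu$; everything else is mechanical.
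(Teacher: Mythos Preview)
Your proof is correct and follows the same line as the paper's: for $\bu\in K(C)$ one observes $x+\bu=x\cdot\bu\in C$, and for the isomorphism one applies the first isomorphism theorem to the surjective homomorphism $x\mapsto\pi_x$, whose kernel is $\langle\bu\rangle$ by the full-propelinear hypothesis. The paper's proof is just a terser version of exactly this argument; the only cosmetic remark is that your appeal to \Cref{lemm:central} is not strictly needed, since the identity $x+\bu=x\cdot\bu$ already follows from $\pi_x(\bu)=\bu$ alone.
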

\begin{proof}
	The fact that $\bu\in K(C)$ is straightforward. The map $x\longrightarrow \pi_x$ is a group homomorphism from $C$ to $\Pi$. Since $C$ is full propelinear, the kernel of this homomorphism is $\langle \bu\rangle$. The statement is proven.\qed
\end{proof}

Next proposition, proved in \cite{rs}, extends a previous result by Ito in \cite{ito} where it was proved for a Sylow subgroup of $C$.
\begin{proposition}[\cite{rs}]\label{lemm:cyclic}
	Let $(C,\cdot )$ be an Hadamard propelinear code of length $4n$. Then $|C|=8n$, but it is not a cyclic group of order $8n$.
\end{proposition}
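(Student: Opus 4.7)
The assertion $|C|=8n$ is immediate from the definition of a Hadamard code. For the main claim I would argue by contradiction: suppose $C=\langle g\rangle$ is cyclic of order $8n$ with generator $g$. Since $\bu\in C$ is an involution (\Cref{lemm:central}) and the unique element of order $2$ in a cyclic group of order $8n$ is $g^{4n}$, we obtain $\bu=g^{4n}$.

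To force a contradiction I would pin down the permutation $\pi_g$. By the preceding proposition $\Pi=\{\pi_x:x\in C\}\cong C/\langle\bu\rangle$ is cyclic of order $4n$ inside $\cS_{4n}$; the HFP axiom (no fixed coordinates for nontrivial $\pi_x$) makes the action of $\Pi$ on the $4n$ coordinates free, and since $|\Pi|=4n$ equals the number of coordinates this free action is regular. Hence $\pi_g$ is a single $4n$-cycle. After a suitable coordinate relabeling $\sigma\in\cS_{4n}$ (which conjugates $\pi_g$ to $\sigma\pi_g\sigma^{-1}$ and is a code equivalence, so preserves the propelinear group isomorphism type) I may assume $\pi_g=R$, the standard right cyclic shift $R(v)_i=v_{i-1\bmod 4n}$. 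From the homomorphism property $\pi_{g^k}=\pi_g^k=R^k$ and the propelinear recursion $g^{k+1}=g^k+\pi_{g^k}(g)=g^k+R^k(g)$, a one-line induction gives
$$
g^k=\sum_{i=0}^{k-1}R^i(g)\quad\text{in }\F^{4n}.
$$
Setting $k=4n$ makes every coordinate of $g^{4n}$ equal to $\sum_{j=1}^{4n}g_j\bmod 2=\wt(g)\bmod 2$. Because $g$ has order $8n>2$ it lies outside $\{\be,\bu\}$, so $\wt(g)=2n$ is even and $g^{4n}=\be$, contradicting $\bu=g^{4n}$.

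The main obstacle is the middle step, namely transferring the cyclicity of the group $(C,\cdot)$ into a concrete coordinate-level description of $\pi_g$: one needs the HFP axioms to upgrade the quotient action of $\Pi$ to a regular action, and then a conjugation to reduce the resulting $4n$-cycle to the canonical shift $R$. Once this normalization is in place, everything else is a short mod-$2$ weight calculation, exploiting that $2n$ is even so the full $R$-orbit sum of any weight-$2n$ vector vanishes in $\F^{4n}$.
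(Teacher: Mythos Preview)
The paper does not supply its own proof here; \Cref{lemm:cyclic} is quoted from \cite{rs} without argument, so there is no in-paper proof to compare your attempt against.

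On correctness: your argument is valid for $\HFP$-codes, but the proposition as stated is for arbitrary Hadamard \emph{propelinear} codes, not full propelinear ones. You invoke the HFP axiom explicitly (``no fixed coordinates for nontrivial $\pi_x$'') to obtain a free, hence regular, action of $\Pi$ on the $4n$ coordinates, and from this the identification of $\pi_g$ with a single $4n$-cycle. Both \Cref{lemm:central} and the ``preceding proposition'' on $\Pi\cong C/\langle\bu\rangle$ that you cite are stated in the paper only for $\HFP$-codes, so your chain of citations already restricts you to that subclass. (The computation $\bu^2=\bu+\pi_\bu(\bu)=\be$ does go through in any propelinear code containing $\bu$, since every coordinate permutation fixes $\bu$; what fails in general is that the kernel of $x\mapsto\pi_x$ need not equal $\langle\bu\rangle$, so $\Pi$ need not have order $4n$ and the action need not be free.)

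Within the $\HFP$ setting your proof is correct and clean: once $\pi_g$ is a single $4n$-cycle, the formula $g^{4n}=\sum_{i=0}^{4n-1}R^i(g)$ makes every coordinate equal to $\wt(g)\bmod 2=0$, contradicting $g^{4n}=\bu$. For the stated generality, however, you still owe an argument when some nontrivial $\pi_x$ has fixed points: without regularity of the $\Pi$-action, $\pi_g$ may split into shorter cycles, and then the orbit-sum in coordinate $j$ becomes $(4n/\ell_j)\sum_{t\in\text{orbit}(j)}g_t\bmod 2$, which is not forced to vanish. You correctly flagged this normalization of $\pi_g$ as the crux; the gap is that it genuinely requires the full-propelinear hypothesis, which the proposition does not assume.
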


It is well known that there are five inequivalent Hadamard codes of length 16. One of them is linear, another is a $\Z_2\Z_4$-linear code \cite{prv2}, and the other three cannot be realized as $\Z_2\Z_4$-linear codes. However, one of those can be realized as a $\Z_2\Z_4Q_8$-code, more specifically, as a pure $Q_8$-code~\cite{riri}. As an easy example of $\HFP$-codes it was shown in \cite{rs} that the last two are full propelinear codes, which are not translation invariant. The group structure of these two propelinear codes correspond to a generalized quaternion group of order 32.

\section{Rank and Kernel of \boldmath{$\HFP$}-codes of type Q}\label{sec:3}
In this section we study the allowable values for the rank and for the dimension of the kernel of an $\HFP$-code of type Q. The main result is that for $\HFP$-codes of type Q and length $4n= 2^sn'$, where $n'$ is odd and $s=2$, we have rank $r=4n-1$ and dimension of the kernel $k=1$; and when $s>2$ we have rank $r\leq 2n$ and dimension of the kernel 1 or $2$.
\begin{proposition}\label{proj}
	Let $C$ be an Hadamard code of length $4n$ and $s\in K(C)$, $s\notin \{\be,\bu\}$. Then, the projection of $C$ onto $\Supp(s)$ is an Hadamard code of length $2n$.
\end{proposition}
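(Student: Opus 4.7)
The plan is to project $C$ onto $S := \Supp(s)$ and show directly that the image has the parameters $(2n,4n,n)$ of a Hadamard code of length $2n$; the result will then follow from the fact that these parameters characterize binary Hadamard codes (Plotkin bound with equality).

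First I would pin down the size of $S$. Because $\be \in C$ and $s \in K(C)$, $s$ is itself a codeword of the Hadamard code $C$, and since $s \notin \{\be,\bu\}$ its weight must be $2n$; hence $|S|=2n$ and the projection $\pi : \F^{4n} \to \F^{2n}$ onto the coordinates indexed by $S$ is well defined. I would also record that $\bu \in K(C)$ (complementation preserves a binary Hadamard code) and that $\bu+s \in C$, since $s\in K(C)$ gives $\bu+s \in \bu+C = C$.

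Next I would count the fibres of $\pi$. For $c_1,c_2 \in C$, the condition $\pi(c_1)=\pi(c_2)$ says that $c_1+c_2$ vanishes on $S$. Because distances between codewords of $C$ lie in $\{0,2n,4n\}$, the vector $c_1+c_2$ has Hamming weight in $\{0,2n,4n\}$; the value $4n$ is impossible since $\bu$ is $1$ on $S$, and a weight-$2n$ vector that is $0$ on the $2n$-element set $S$ must be supported exactly on the complement of $S$, i.e.\ equal to $\bu+s$. Thus the fibre through $c_1$ is exactly $\{c_1,\,c_1+\bu+s\}$, a set of size $2$, and $|\pi(C)|=|C|/2=4n$.

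The main step is the minimum distance. Take $c_1,c_2 \in C$ with $\pi(c_1)\neq\pi(c_2)$, set $a=d(\pi(c_1),\pi(c_2))$ and $b$ the number of disagreements off $S$, so $a+b=d(c_1,c_2)\in\{2n,4n\}$. The trick is to apply the same count to the codeword $c_1+s \in C$ (which exists because $s\in K(C)$): on $S$ the roles of agreement and disagreement with $c_2$ are swapped by $s$, while off $S$ nothing changes, so $d(c_1+s,c_2)=(2n-a)+b$. Since this value must also lie in $\{0,2n,4n\}$, running through the combinations with $a+b\in\{2n,4n\}$ forces $(a,b)\in\{(n,n),(2n,0),(2n,2n)\}$; the first gives distance $n$, the others give distance $2n$, and the case $a=0$ is excluded by $\pi(c_1)\neq\pi(c_2)$. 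Therefore $d(\pi(c_1),\pi(c_2))\ge n$.

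Finally I would conclude: $\pi(C)$ is a binary code of length $2n$, with $4n$ codewords and minimum distance $n$. These parameters meet the Plotkin bound with equality, and it is classical that a binary $(2n,4n,n)$-code is a Hadamard code (its codewords, written as $\pm 1$ vectors, give a $2n\times 2n$ Hadamard matrix together with its negative). The anticipated minor obstacle is phrasing this last invocation cleanly; the genuine content lies in the fibre-counting and the swap-by-$s$ argument for the distance, both of which hinge on $s$ being simultaneously a codeword of weight $2n$ and a kernel element.
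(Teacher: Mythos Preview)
Your proof is correct and is essentially the paper's argument: both identify the fibres of the projection as $\{c,\,c+\bu+s\}$ and both pin down the projected distances by comparing $d(c_1,c_2)$ with $d(c_1+s,c_2)$. The only cosmetic difference is the wrap-up---the paper reads off $d(v_1,w_1)=n$ directly from the two distance equations, while you route through the Plotkin characterisation, but your own case analysis already yields $a\in\{n,2n\}$, which is the same conclusion.
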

\begin{proof}
	Without loss of generality we can write $s$ in the form $(s_1,s_2)$ where $s_1=(1,\ldots,1)$, $s_2=(0,\ldots,0)$.
	Let $v=(v_1,v_2), w=(w_1,w_2)$ be any two vectors in $C$. After projecting the $8n$ vectors of $C$ over $\Supp(s)$ we have $2n$ different vectors and their complements. Indeed, exactly the vectors $v=(v_1,v_2), w=(v_1,v_2+\bu)$ goes to the same vector in the projection over the support of $s$. Hence, we want to prove that if $v_1\not= w_1$ and $v_1\not= w_1+\bu$ then $d(v_1,w_1)=n$.
	
	Since $s\in K(C)$, for any $v\in C$ we have $s+v\in C$, so $(\bu+v_1,v_2)\in C$ and for any $w\in C$ such that $v\neq w, v\neq w+\bu, s+v\neq w, s+v\neq w+\bu$ we have $d(s+v,w)=2n$ and also $d(v,w)=2n$. Hence, $d(v_1+\bu,w_1)+d(v_2,w_2)=2n$ and $d(v_1,w_1)+d(v_2,w_2)=2n$.
	Therefore, $d(v_1+\bu,w_1)=d(v_1,w_1)$ and since $\bu+v_1$ is the complementary vector of $v_1$ we have $d(\bu+v_1,w_1)+d(v_1,w_1)= 2n$. Thus, $d(v_1+\bu,w_1)=d(v_1,w_1)= n$. This proves the statement. \qed
\end{proof}

Next three lemmas are well known. We include them without proof.
\begin{lemma}[\cite{bmrs}]\label{prop:3.2}
	Let $(C,\cdot)$ be a propelinear code of length $4n$.
	\begin{enumerate}[i)]
		\item  For $x\in C$ we have $x \in K(C)$ if and only if $\pi_x \in \Aut(C)$.
		%Further, if $x\in K(C)$ then $\pi_x\in \Aut(K(C))$.
		\item The kernel $K(C)$ is a subgroup of $C$ and also a binary linear space.
		\item If $c\in C$ then $\pi_c\in Aut(K(C))$ and $c\cdot K(C)=c+K(C)$.
	\end{enumerate}
\end{lemma}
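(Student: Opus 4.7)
The three parts are essentially sequential: (i) gives the permutation characterization of the kernel, (ii) leverages that characterization to get the group/linear structure, and (iii) uses both to show $\pi_c$ respects the kernel.

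For part (i), the plan is to exploit the identity $x\cdot C = C$ that holds for any $x\in C$ (since left multiplication in the propelinear group is a bijection on $C$). Spelling out $x\cdot C$ using $x\cdot y = x+\pi_x(y)$ gives $x + \pi_x(C) = C$, i.e.\ $\pi_x(C) = x+C$. Therefore $x\in K(C) \iff C+x=C \iff \pi_x(C)=C \iff \pi_x\in\Aut(C)$. This is essentially a one-line calculation once the identity $\pi_x(C)=x+C$ is in hand.

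For part (ii), I would first note $K(C)\subseteq C$: from $\be\in C$ and $C+z=C$ we get $z=\be+z\in C$. Closure under binary addition is immediate: $(C+x)+y = C+y = C$, and $\be\in K(C)$, so $K(C)$ is a binary subspace. For the propelinear subgroup structure I would invoke (i): if $x,y\in K(C)$ then $\pi_x,\pi_y\in\Aut(C)$, and by Definition~\ref{def:propelinear}(2) we have $\pi_{x\cdot y} = \pi_x\pi_y\in\Aut(C)$, so $x\cdot y\in K(C)$; likewise $\pi_{x^{-1}} = \pi_x^{-1}\in\Aut(C)$ gives closure under inverses.

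For part (iii), the real content is showing $\pi_c(K(C))\subseteq K(C)$ for arbitrary $c\in C$, from which $\pi_c(K(C))=K(C)$ follows by finiteness, and then $c\cdot K(C) = c+\pi_c(K(C)) = c+K(C)$ is automatic. Fix $z\in K(C)$ and any $y\in C$; I want to show $y+\pi_c(z)\in C$. The trick I plan to use is to rewrite $y$ via the propelinear operation: set $y' = c^{-1}\cdot y\in C$, so that $y = c\cdot y' = c+\pi_c(y')$. Then
\[
y + \pi_c(z) \;=\; c + \pi_c(y') + \pi_c(z) \;=\; c + \pi_c(y'+z).
\]
Since $z\in K(C)$, the vector $y''=y'+z$ lies in $C$, and consequently $c + \pi_c(y'') = c\cdot y''\in C$. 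Hence $C+\pi_c(z)\subseteq C$, and since translation by $\pi_c(z)$ is a bijection on $\F^{4n}$ we get equality, so $\pi_c(z)\in K(C)$. The only subtle point is the choice to pull $y$ back through $c^{-1}$ so that the linearity of $\pi_c$ over $\F$ can be used to absorb $z$ into a codeword; beyond that the argument is mechanical.
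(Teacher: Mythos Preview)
Your argument is correct in all three parts; the key identities $\pi_x(C)=x+C$ in (i) and the pullback $y=c\cdot(c^{-1}\cdot y)$ in (iii) are exactly what is needed, and the subgroup/subspace verifications in (ii) are routine once (i) is available. The paper itself does not prove this lemma---it is stated with a citation to \cite{bmrs} and the surrounding text reads ``Next three lemmas are well known. We include them without proof.''---so there is no in-paper argument to compare against.
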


\begin{lemma}[\cite{prv1,prv3}]\label{odd}
	Let $C$ be a non linear Hadamard code of length $2^sn'$, where $n'$ is odd. The dimension of the kernel is $1\leq k\leq s-1$.
\end{lemma}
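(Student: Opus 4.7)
The lower bound $k\ge 1$ is immediate from $\bu\in K(C)$, which holds because every Hadamard code is closed under complementation. For the upper bound $k\le s-1$ the plan is to split on whether $n'=1$ or $n'\ge 3$, since the case $n'=1$ admits linear Hadamard codes of the same length (the first-order Reed--Muller codes) and so does not behave well under a naive induction.

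When $n'=1$, the length of $C$ is $2^s$ and $|C|=2^{s+1}$. Since $K(C)$ is an $\F$-subspace and $C$ is partitioned into translates $v+K(C)$ (one per orbit of $K(C)$ acting by translation, each of size $|K(C)|=2^k$), the number of $K(C)$-cosets inside $C$ equals $2^{s+1-k}$. If this count were $1$ or $2$, then $C$ would itself be an $\F$-subgroup: in the two-coset case $C=K(C)\sqcup(v+K(C))$, closure follows from $(v+k_1)+(v+k_2)=k_1+k_2\in K(C)$, while the other closure checks are immediate. This contradicts the non-linearity of $C$, so $2^{s+1-k}\ge 4$, which yields $k\le s-1$.

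When $n'\ge 3$ I would induct on $s$, using \Cref{proj} as the length-halving step. Given $k\ge 2$, pick $s_1\in K(C)\setminus\langle\bu\rangle$; then $\wt(s_1)=2n$, and letting $\rho$ denote the projection onto $\Supp(s_1)$, \Cref{proj} gives that $C_1:=\rho(C)$ is an Hadamard code of length $2^{s-1}n'$, automatically non-linear because linear Hadamard codes have length a power of $2$. The key technical point is to verify that $\rho(K(C))\subseteq K(C_1)$ (translations of $C$ descend to translations of $C_1$) and that $\ker(\rho|_{K(C)})=\{\be,s_1+\bu\}$: a $t\in K(C)$ with $\rho(t)=\be$ vanishes on $\Supp(s_1)$, but any element of $K(C)$ has weight $0$, $2n$, or $4n$, and weight $2n$ with support disjoint from $\Supp(s_1)$ forces $\Supp(t)=\Supp(s_1+\bu)$. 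Hence $\dim K(C_1)\ge k-1$. The base case $s=2$ collapses directly: the projected length $2n'\ge 6$ satisfies $2n'\equiv 2\pmod 4$, so no such Hadamard code exists, forcing $k\le 1$. For $s\ge 3$, the induction hypothesis applied to $C_1$ gives $\dim K(C_1)\le s-2$, hence $k\le s-1$. The main obstacle throughout is precisely the case $n'=1$, where $C_1$ might itself be linear and would wreck a straight induction; handling it by the separate coset-counting argument above is what makes the two halves of the proof fit together cleanly.
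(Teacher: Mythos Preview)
The paper does not supply its own proof of this lemma; it is quoted from \cite{prv1,prv3} under the remark ``Next three lemmas are well known. We include them without proof.'' So there is nothing in the paper to compare against directly.

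Your argument is correct. The coset-counting for $n'=1$ is clean: the observation that a union of at most two cosets of $K(C)$ is automatically closed under addition is exactly what is needed, and since $|C|=2^{s+1}$ this forces $k\le s-1$. For $n'\ge 3$ the induction via \Cref{proj} works as stated. The three technical points you check are all sound: (i) $\rho(K(C))\subseteq K(C_1)$ follows because projection is $\F$-linear, so $\rho(t)+C_1=\rho(t+C)=\rho(C)=C_1$; (ii) $\ker(\rho|_{K(C)})=\{\be,s_1+\bu\}$ follows from the weight restriction on codewords; (iii) $C_1$ is non-linear because a linear code has $2^d$ elements and a Hadamard code of length $N$ has $2N$ elements, so linearity would force $N$ to be a power of~$2$. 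The base case $s=2$ correctly derives a contradiction from the non-existence of Hadamard matrices of order congruent to $2\pmod 4$ and greater than~$2$. One implicit point worth making explicit: you use $s\ge 2$ so that $k=1$ already satisfies $k\le s-1$; this is guaranteed since non-linear Hadamard codes have length at least~$4$.
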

\begin{lemma}[\protect{\cite[Th. 2.4.1 and Th. 7.4.1]{ak}}] \label{aske}
	Let $C$ be an Hadamard code of length $4n=2^sn'$, where $n'$ is odd.
	\begin{enumerate}[(i)]
		\item If $s=2$ then $r = 4n-1$.
		\item If $s\geq 3$ then the rank of $C$ is $r\leq 2n$, with equality if $s=3$.
	\end{enumerate}
\end{lemma}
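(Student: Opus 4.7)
The plan is to reduce the computation of $\rank(C)$ to an $\F_2$-rank computation for the incidence matrix of the associated Hadamard $2$-design and then handle parts (i) and (ii) separately. First I would normalize the underlying Hadamard matrix so that the first row and first column of its binary form $H_b$ are zero, and let $N$ be the $(4n-1)\times(4n-1)$ matrix obtained by deleting them. Since column~$1$ of $H_b$ is zero, $\bu$ cannot lie in the $\F_2$-span of the rows of $H_b$, and hence $\rank(C)=\rank(N)+1$. The Hadamard property translates into the fact that every row of $N$ has weight $2n$ and any two distinct rows intersect in exactly $n$ positions, so $N$ is the incidence matrix of a symmetric $2$-design and satisfies the integer identity $NN^T=n(I+J)$.

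For part (i), where $s=2$ and $n$ is odd, reducing $NN^T$ modulo $2$ yields $I+J$. Since $4n-1$ is odd, the null space of $I+J$ over $\F_2$ is spanned by $\bu$, so $\rank_{\F_2}(I+J)=4n-2$, which forces $\rank(N)\geq 4n-2$. Conversely, each column of $N$ has even weight $2n$, so summing all rows of $N$ gives the zero vector, yielding a non-trivial relation and $\rank(N)\leq 4n-2$. Combining both bounds, $\rank(N)=4n-2$ and therefore $\rank(C)=4n-1$.

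For the inequality in part (ii), where $s\geq 3$, I would use a self-orthogonality argument. In this range $n$ is even and $2n\equiv 0\pmod{4}$, so every non-trivial codeword of $C$ has weight in $\{2n,4n\}$, both divisible by $4$. Moreover, for any two distinct non-trivial codewords $v,w\in C$ an easy computation using $d(v,w)\in\{2n,4n\}$ gives $|v\cap w|\in\{0,n\}$, hence even. Therefore every pair of codewords has vanishing $\F_2$-inner product, $\mathrm{span}(C)$ is self-orthogonal, and consequently $\rank(C)\leq 4n/2=2n$.

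The equality claim for $s=3$ is the main obstacle, since elementary parity arguments only yield the inequality. To establish it I would sharpen the design equation: writing $n=2n'$ with $n'$ odd gives $NN^T=2n'(I+J)$, and a $4$-adic (Smith-form) refinement of the self-orthogonality argument, combined with the facts that $\mathrm{span}(C)$ is doubly even and contains codewords of weight $\equiv 4\pmod{8}$, is enough to pin down $\rank(N)$ to exactly $2n-1$ and hence $\rank(C)=2n$. This step is essentially the content of Theorem~7.4.1 in~\cite{ak} and is the only place where genuine design-theoretic input beyond parity counting is required.
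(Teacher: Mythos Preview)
The paper does not actually prove this lemma: it is one of the three ``well known'' results that the authors explicitly state without proof, citing Assmus and Key. So there is no in-paper argument to compare your proposal against; the relevant benchmark is the design-theoretic proof in \cite{ak}.

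Your argument for part (i) and for the inequality in part (ii) is correct and is essentially the standard one. The reduction $\rank(C)=\rank(N)+1$ via normalization is fine (indeed $\bu\notin\mathrm{span}(H_b)$ because the first column of $H_b$ is zero), and the identity $NN^T=nI+nJ=n(I+J)$ for the $(4n-1)\times(4n-1)$ core of a normalized Hadamard matrix is exactly the Hadamard $2$-$(4n-1,2n-1,n-1)$ design relation in disguise. For $s=2$ the mod-$2$ reduction to $I+J$ together with the column-sum parity pins down $\rank(N)=4n-2$. For $s\geq 3$ the self-orthogonality argument is the right one; one small inaccuracy is that $|v\cap w|$ need not lie in $\{0,n\}$ once you allow $w=\bu$ (then $|v\cap\bu|=\wt(v)=2n$), but this value is still even, so the conclusion $\langle C\rangle\subseteq\langle C\rangle^\perp$ and hence $r\leq 2n$ is unaffected.

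The only genuine soft spot is the equality $r=2n$ when $s=3$. You are right that parity alone does not give it, and your sketch (``a $4$-adic Smith-form refinement \ldots essentially the content of Theorem~7.4.1 in~\cite{ak}'') is more of a pointer than a proof. Since the paper itself simply imports the result from \cite{ak}, deferring this step to that reference is entirely in keeping with how the lemma is used here; just be aware that the argument you allude to (doubly-even self-orthogonal code containing words of weight $\equiv 4\pmod 8$) does not by itself force $r=2n$ without the extra structure coming from the design equation, so if you ever need to write it out in full you should follow the Smith normal form computation in \cite{ak} rather than a pure code-parity argument.
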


\begin{definition}
	Let $C$ be an $\HFP$-code of length $4n=2^sn'$ with $n'$ odd. We will say that $C$ is a code of type Q when it is a group of type Q~\cite{itoIII}
	\begin{eqnarray*}
		C=\langle\ba,\bb : \ba^{4n}=\be, \ba^{2n}=\bb^2, \bb^{-1}\ba\bb=\ba^{-1}\rangle.
	\end{eqnarray*}
\end{definition}

An $\HFP$-code $C$ of type Q contains only one involution  $\ba^{2n}=\bb^2=\bu$ which is central in $C$.

The interest of studying $\HFP$-codes of type Q is given by a conjecture of Ito stated in~\cite{itoIII} saying that for any length $4n$ it is possible to find an Hadamard group of type Q.

\begin{proposition}\label{rank-kernel}
	Let $C=\langle \ba,\bb \rangle$ be an $\HFP$-code of type Q and length $4n=2^sn'$ with $n'$ odd. Then
	\begin{enumerate}[(i)]
		\item Code $C$ is linear if and only if the two following conditions are satisfied: $4n=2^s$, $r=k=s+1$.
		\item If $C$ is not linear then $\ba\notin K(C)$.
		%\item If $C$ is not linear then the dimension of the kernel is $k\in\{1,\ldots,s-1\}$.
		\item If $s=2$ then $r=4n-1$ and $k=1$.
	\end{enumerate}
\end{proposition}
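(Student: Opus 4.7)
My plan is to prove the three parts separately, using the lemmas from this section together with the propelinear group structure of $(C,\cdot)$.

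\textbf{Part (i)} is a cardinality argument. If $C$ is linear, then $K(C)=C$, so $|K(C)|=|C|=8n$. Writing $|K(C)|=2^k$ and $8n=2^{s+1}n'$ and comparing $2$-adic valuations forces $n'=1$ (i.e.\ $4n=2^s$) and $k=s+1$; the equality $r=k$ is automatic in the linear case. Conversely, $4n=2^s$ and $k=s+1$ give $|K(C)|=2^{s+1}=|C|$, whence $K(C)=C$.

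\textbf{Part (ii)} I would handle by contradiction. Assume $\ba\in K(C)$. By \Cref{prop:3.2}(ii), $K(C)$ is a subgroup of the propelinear group $(C,\cdot)$ of order $2^k$. The type Q presentation together with $|C|=8n$ (\Cref{lemm:cyclic}) forces $\ba$ to have order exactly $4n$ in $(C,\cdot)$: if instead $|\langle\ba\rangle|=d<4n$, then either $\bb\in\langle\ba\rangle$, making $C$ abelian and incompatible with $\bb^{-1}\ba\bb=\ba^{-1}$ once $\ba^2\ne\be$, or else $|C|\leq 2d<8n$, contradicting \Cref{lemm:cyclic}. Lagrange applied inside $K(C)$ then gives $4n\mid 2^k$, so $4n$ must be a power of $2$ (hence $n'=1$) and $k\geq s$. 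But \Cref{odd} asserts $k\leq s-1$ whenever $C$ is non-linear, contradicting $k\geq s$.

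\textbf{Part (iii)} is then a direct application of the earlier lemmas. When $s=2$, \Cref{aske}(i) immediately yields $r=4n-1$. For the kernel, part (i) shows that $C$ is automatically non-linear whenever $n'>1$ (the case $n'=1$, $4n=4$ being the degenerate length-$4$ linear situation), and then \Cref{odd} forces $1\leq k\leq s-1=1$, so $k=1$.

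The only genuinely nontrivial step is pinning down the propelinear order of $\ba$ in part (ii); everything else reduces to invoking \Cref{prop:3.2}, \Cref{lemm:cyclic}, \Cref{odd} and \Cref{aske}. Once the order $4n$ is in hand, a single application of Lagrange inside the subgroup $K(C)$ closes the case.
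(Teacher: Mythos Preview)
Your argument is correct and follows essentially the same path as the paper's proof: in each part you invoke exactly the same auxiliary results (\Cref{prop:3.2}, \Cref{odd}, \Cref{aske}) and the crux of part (ii) is, in both versions, that $\langle\ba\rangle\subseteq K(C)$ forces $|K(C)|\geq 4n$, which is incompatible with \Cref{odd} once $C$ is nonlinear. The paper phrases this as ``$|C/K(C)|\leq 2$, hence either $C=K(C)$ or $|K(C)|=2^sn'$ with $n'=1$'', while you phrase it as ``$4n\mid 2^k$''; these are the same observation.

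One small simplification: your detour to establish that $\ba$ has order exactly $4n$ in $(C,\cdot)$ is longer than necessary. Since $\ba^{2n}=\bu\neq\be$ (the all-ones vector) and $\ba^{4n}=\bu^2=\be$, the order of $\ba$ divides $4n$ but not $2n$; combined with $|C|=8n$ and $\bb\notin\langle\ba\rangle$ (as $\bb^{-1}\ba\bb=\ba^{-1}\neq\ba$), this already gives $|\langle\ba\rangle|=4n$. The paper simply takes this as implicit in the type~Q presentation. Your handling of the degenerate case $n'=1$ in part (iii) is in fact more careful than the paper, which silently assumes nonlinearity there.
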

\begin{proof}
	First item is straightforward.
	
	For the second item, deny the statement, so assume that $\ba\in K(C)$. Then $\langle \ba \rangle \subset K(C)$ and so $|C/K(C)|\leq 2$. We conclude that $C=K(C)$ and $C$ is a linear code or else $|K(C)|=2^sn'$, with $n'=1$, which contradicts \Cref{odd}.
	
	The third item is straightforward from \Cref{odd,aske}.
\end{proof}

\begin{lemma}\label{prop:vice}
	Let $(C,\cdot)$ be a propelinear code of length $4n$. To check that $C$ is an Hadamard code it is enough to check that $(C,\cdot)$ has $8n-2$ codewords (different from $\be, \bu$) and that the weight of all these codewords is $2n$.
\end{lemma}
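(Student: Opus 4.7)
The plan is to convert the weight hypothesis into a statement about pairwise distances by exploiting the propelinear operation, and then observe that the resulting parameters $(4n,8n,2n)$ already force $C$ to be the Hadamard code of some Hadamard matrix.

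First I would establish the identity
\[
d(x,y)=\wt(x^{-1}\cdot y),\qquad x,y\in C,
\]
which holds in any propelinear code. Writing $z=x^{-1}\cdot y$, the relation $x\cdot z=y$ unpacks to $x+\pi_x(z)=y$, hence $\pi_x(z)=x+y$; since the permutation $\pi_x$ preserves Hamming weight, $\wt(z)=\wt(x+y)=d(x,y)$.

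Next comes the main step. Take distinct $x,y\in C$ and set $z=x^{-1}\cdot y\in C\setminus\{\be\}$. If $z=\bu$ then $d(x,y)=\wt(\bu)=4n$; otherwise $z$ is one of the $8n-2$ codewords of weight $2n$ guaranteed by the hypothesis, so $d(x,y)=2n$. Therefore every pair of distinct codewords of $C$ is at distance $2n$ or $4n$, the minimum distance is exactly $2n$, and $C$ has parameters $(4n,8n,2n)$.

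It remains to recognise such a code as an Hadamard code. Since $\bu\in C$ and every permutation fixes $\bu$, we have $x\cdot\bu=x+\pi_x(\bu)=x+\bu\in C$ for every $x$, so $C$ is closed under bit-wise complementation and splits into $4n$ complementary pairs, exactly one element of each pair having a $0$ in the first coordinate. Any two such representatives are distinct and non-complementary, hence at distance exactly $2n$. Re-encoding $0\mapsto+1$, $1\mapsto-1$ turns them into $4n$ pairwise orthogonal $\pm1$-rows of length $4n$, i.e.\ a normalized Hadamard matrix, and $C$ is precisely the binary Hadamard code obtained from its rows together with their complements.

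The only nontrivial input is the identity $d(x,y)=\wt(x^{-1}\cdot y)$ in the first step; once it is in hand the argument reduces to a Plotkin-style parameter count, and no serious obstacle is expected.
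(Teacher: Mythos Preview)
Your proof is correct and follows essentially the same route as the paper: both arguments hinge on the identity $d(x,y)=\wt(x^{-1}\cdot y)$, derived from $y=x+\pi_x(x^{-1}\cdot y)$ and the weight-preservation of $\pi_x$, to reduce the pairwise-distance condition to the weight hypothesis. The only difference is that the paper stops once the distances are established, treating the passage from the parameters $(4n,8n,2n)$ to an actual Hadamard matrix as understood, whereas you spell out that final step (closure under complementation, selection of the $4n$ representatives with a $0$ in the first coordinate, pairwise orthogonality).
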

\begin{proof}
	Let $x,y\in C$ and set $z=x^{-1}\cdot y\in C$. We have $y=x\cdot z=x+\pi_x(z)$ and then $d(x,y)=\wt(x+y)=\wt(\pi_x(z))=\wt(z)$. Hence, we see that the distance of any two codewords  can be computed through the weight of a codeword. When $x\not=y$ or $x\not= y+\bu$ then $z\notin \{\be, \bu\}$ and so we only require that $\wt(z)=2n$ for all $8n-2$ codewords different from $\be$, $\bu$. This proves the statement.\qed
\end{proof}

\begin{proposition}\label{prop:3.7}
	Let $C=\langle \ba,\bb \rangle$ be an $\HFP$-code of type Q and length $4n$. Then, up to equivalence, we can assume
	\begin{enumerate}[i)]
		\item $\pi_\ba=(1,2,\ldots,2n)(2n+1,2n+2,\ldots,4n)$,
		\item $\pi_\bb=(1,4n)(2,4n-1)\ldots(2n,2n+1)$,
		\item $\Pi = C/\langle \bu \rangle$ is the dihedral group $\D_{2n}$ of order $4n$,
	\end{enumerate}
\end{proposition}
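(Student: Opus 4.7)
My plan is to first identify $\Pi = C/\langle\bu\rangle$ as an abstract group and show that it acts regularly on the $4n$ coordinates; parts (i) and (ii) then fall out of choosing a convenient bijection with the left regular representation.

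For (iii), the map $x\mapsto\pi_x$ is a group homomorphism $C\to\cS_{4n}$, and its kernel is exactly $\langle\bu\rangle$. Indeed, by the $\HFP$ hypothesis $\pi_x=I$ forces $x\in\{\be,\bu\}$, since otherwise $\pi_x$ would fix every coordinate. Hence $\Pi\cong C/\langle\bu\rangle$ has order $4n$ and is generated by $\bar{\ba}$ and $\bar{\bb}$ subject to $\bar{\ba}^{2n}=\bar{\be}$, $\bar{\bb}^{2}=\bar{\be}$, $\bar{\bb}^{-1}\bar{\ba}\bar{\bb}=\bar{\ba}^{-1}$. Moreover $\bar{\ba}$ has order exactly $2n$, because $\langle\bu\rangle\cap\langle\ba\rangle=\{\ba^{0},\ba^{2n}\}$. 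This is precisely the standard presentation of $\D_{2n}$.

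Next I would show that $\Pi$ acts regularly on the $4n$ coordinates. Transitivity is immediate from Remark~\ref{remark:1}: for every $i$ there exists $x\in D_1$ with $\pi_x(e_i)=e_1$. Freeness is the $\HFP$ axiom: if $\pi_a$ fixes any coordinate then $a\in\{\be,\bu\}$, so the stabilizer of every coordinate is trivial. Every regular action of $\Pi$ is permutation-isomorphic to the left regular representation, and under this identification $\pi_{\ba}$ acts on $\D_{2n}$ as left multiplication by $\bar{\ba}$; its orbits are the two right cosets of $\langle\bar{\ba}\rangle$, each of size $2n$. Choosing the bijection $k\leftrightarrow\bar{\ba}^{k-1}$ for $1\le k\le 2n$ and $2n+k\leftrightarrow\bar{\bb}\bar{\ba}^{1-k}$ for $1\le k\le 2n$ yields (i).

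For (ii), $\pi_{\bb}$ is a fixed-point-free involution satisfying $\pi_{\bb}^{-1}\pi_{\ba}\pi_{\bb}=\pi_{\ba}^{-1}$. The main dichotomy is whether $\pi_{\bb}$ preserves each $2n$-cycle of $\pi_{\ba}$ setwise or swaps them. The first case is impossible: a $\pi_{\ba}$-conjugating involution of a single cycle $(1,\ldots,2n)$ must be a reflection, and every reflection of a cycle of even length $2n$ has exactly two fixed points, contradicting $\HFP$. So $\pi_{\bb}$ swaps the two cycles; writing $\pi_{\bb}(i)=\sigma(i)+2n$ for $1\le i\le 2n$, the conjugation relation forces $\sigma$ to reverse the cyclic shift, so $\sigma(i)\equiv c-i\pmod{2n}$ for some constant $c$. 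The residual coordinate freedom compatible with (i) is the centralizer of $\pi_{\ba}$ in $\cS_{4n}$, generated by cyclic shifts within each cycle and by the swap of the two cycles; this acts transitively on the admissible constants $c$, so we can normalize to $\sigma(i)=2n+1-i$, yielding $\pi_{\bb}=(1,4n)(2,4n-1)\cdots(2n,2n+1)$. The main technical obstacle is this last normalization: one must verify that the centralizer of $\pi_{\ba}$ provides exactly enough flexibility to pin down $c$ without disturbing (i), and that the reflection-has-fixed-points argument goes through uniformly for all $n$.
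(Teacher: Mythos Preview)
Your approach via the regular action of $\Pi$ on the coordinates is more structural than the paper's direct argument. The paper shows $\pi_\ba$ has order $2n$ (hence is a product of two $2n$-cycles); then observes that if $\pi_\bb$ sent position $1$ into the first cycle, say to position $i\le 2n$, then $\pi_{\ba^{-(i-1)}\bb}$ would fix position $1$, contradicting the $\HFP$ hypothesis; and finally notes that once $\pi_\bb(1)=2n+i$ is chosen, the relation $\pi_\bb\pi_\ba=\pi_\ba^{-1}\pi_\bb$ determines $\pi_\bb$ completely. Your regular-action route is a legitimate alternative and makes item (iii) fall out first rather than last.

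There is, however, a genuine gap in your argument for (ii), and it is not where you flagged it. The normalization step via the centralizer of $\pi_\ba$ is fine. The error is the assertion that ``every reflection of a cycle of even length $2n$ has exactly two fixed points'': in the dihedral action on $2n$ points, half of the reflections fix two points and the other half fix none. So $\pi_\bb$ could, as far as that argument goes, preserve each $\pi_\ba$-orbit and still be fixed-point-free. What actually rules this out is the $\HFP$ condition applied to some $\ba^j\bb$ rather than to $\bb$ itself: if $\pi_\bb$ acted on the first cycle as $i\mapsto c+1-i\pmod{2n}$, then $\pi_{\ba^j\bb}$ would act there as $i\mapsto c+1+j-i$, which has a fixed point whenever $c+j$ is odd. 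This is precisely the paper's argument in different clothing. More elegantly, and more in keeping with your own setup, you can bypass the reflection dichotomy entirely: once the action is identified with the left regular representation of $\D_{2n}$, the two $\pi_\ba$-orbits are the right cosets of $\langle\bar\ba\rangle$, and left multiplication by $\bar\bb\notin\langle\bar\ba\rangle$ must interchange them.
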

\begin{proof}
	Take $\ba\in C$ and see that since it is of order $4n$ then the associated permutation $\pi_\ba$ is of order $2n$. Indeed, if for $r<2n$ we have $\pi_\ba^r=I$ then $\ba^{2r}=\ba^r+\pi_{\ba^r}(\ba^r)=\ba^r+\pi_\ba^r(\ba^r)=\be$, which contradicts that $\ba$ is of order $4n$. Therefore we can think of $\pi_\ba$ as two disjoint cycles of length $2n$, say for instance $\pi_\ba=(1,2,\ldots,2n)(2n+1,\ldots,4n)$. In the case of $\pi_\bb$, with an analogous argumentation as before we can say that it is of order two and so $\pi_\bb$ is a composition of $2n$ disjoint transpositions. Each one of the transpositions sends an element of the first part of $\{1,2,\ldots,4n\}$ to the second part and vice versa. Indeed, assume for instance that $\pi_\bb$ moves the first position to the $(2n+i)$th, where $i\leq 2n$, which is the same position that we obtain using $\pi_{\ba^{i-1}}$, so $\pi_{\bb^{-1}\ba^{i-1}}$ has a fixed point which contradicts that $C$ is full propelinear.
	Furthermore, if we assume that $\pi_{\bb}$ moves, for instance, the first position to the $i$th position in the second part of $\{1,2,\ldots,4n\}$  then $\pi_\bb$ is uniquely determined.
	Indeed, as $\pi_\bb\pi_\ba=\pi_\ba^{-1}\pi_\bb$ we have that $2n$ is moved to $1$ by $\pi_{\ba}$, $1$ is moved to $(2n+i)$ by $\pi_\bb$ and $(2n+i)$ is moved to $(2n+i+1)$ by $\pi_\ba$. Hence, $2n$ is moved to $(2n+i+1)$ by $\pi_\bb$, and so on. Hence, $\pi_\bb=(1,2n+i)(2n,2n+i+1)(2n-1,2n+i+2)\ldots (2, 2n+i-1)$.
	Items $i)$ and $ii)$ are proven.
	Item $iii)$ is straightforward from the definition of $\pi_\ba$ and $\pi_\bb$.\qed
\end{proof}

In \Cref{remark:1}, for a generic $\HFP$-code $C$, we showed that we can consider the coordinates of any vector indexed by the elements of $D_1$ (the set of codewords with a zero in the first coordinate) in such a way that for any $x\in D_1$ the indexed coordinate is the $i$th such that $e_1=\pi_x(e_i)$.

The value of the $i$th coordinate (indexed by $x\in D_1$) of an element $y\in C$ is given by $\gamma_{(xy)}\in \Z_2$, where $\gamma_{(xy)}=0$ if and only if $\delta_{(xy)}=\be$. The value of $\delta_{(xy)}\in \{\be,\bu\}$ is such that $\delta_{(xy)}xy\in D_1$.

Now, after \Cref{prop:3.7} it is important not only to have the coordinates indexed by the elements in $D_1$, but also to order the elements in $D_1$ in such a way that the permutations of elements of $C$ be in the form given in that proposition. Hence, fix $\be\in D_1$ as the index for the first coordinate. From \Cref{prop:3.7} the second coordinate should be indexed by $x$ such that $e_2=\pi_{\ba}(e_1)$ and $e_1=\pi_x(e_2)$ (see \cref{indexing}). Hence, $e_1=\pi_{x\ba}(e_1)$ which means that either $x\ba=\be$ or $x\ba=\bu$, so and $x=\ba^{-1}$, up to complement. Again, the third coordinate should be indexed by $y$ such that $e_3=\pi_{\ba^2}(e_1)$ and $e_1=\pi_y(e_3)$ (see \cref{indexing}). Hence, $e_1=\pi_{y\ba^2}(e_1)$ which means that either $y\ba^2=\be$ or $y\ba^2=\bu$, so $y=\ba^{-2}$, up to complement. Again and again we will obtain that the coordinates $1$st, $2$nd, $\ldots$, $2n$th are indexed by the elements in $D_1$ corresponding (up to complement) to $\be, \ba^{-1}, \ldots, \ba^{-2n+1}=\ba\bu$, respectively.

Now, considering permutation $\pi_{\bb}$ in \Cref{prop:3.7}, the $2n+1$th coordinate is indexed by $x$ such that $e_{2n+1}=\pi_{\bb}(e_{2n})$ and $e_1=\pi_x(e_{2n+1})$ (see \cref{indexing}). Also $\pi_{\ba}^{2n-1}(e_1)=e_{2n}$.
Hence, $e_1=\pi_x(e_{2n+1})=\pi_{x\bb}(e_{2n})=\pi_{x\bb\ba^{2n-1}}(e_1)$ which means that $x\bb\ba^{2n-1}=\be$ and so $x=\ba^{-(2n-1)}\bb^{-1}$ or $x=\ba\bb$, up to complement. And again, with the same argumentation, we obtain that the coordinates $(2n+1)$th, $(2n+2)$th, $\ldots$, $4n$th are indexed by the elements in $D_1$ corresponding (up to complement) to $\ba\bb, \ba^{2}\bb, \ldots, \bb$, respectively.

Summarizing, we have the following remark which will be used throughout the paper.
\begin{remark}\label{order}~\\
	Let $C=\langle \ba,\bb \rangle$ be an $\HFP$-code of type Q and length $4n$. The coordinates of the elements in $C$ are indexed by the elements in $D_1$ (the set of vectors with a zero in the first coordinate) in such a way that the $i$th coordinate is indexed by $b\in D_1$ such that $e_1=\pi_b(e_i)$. We have that the coordinates $1$st, $2$nd, $\ldots$, $2n$th, $(2n+1)$th, $(2n+2)$th, $\ldots$, $4n$th are indexed by the elements in $D_1$ corresponding (up to complement) to $\be$, $\ba^{-1}$, \ldots, $\ba$, $\ba\bb, \ba^{2}\bb, \ldots, \bb$, respectively.
	
	Also, we have that the value of the coordinate indexed by $x\in D_1$ in vector $y\in D_1$ is given by $\gamma_{(xy)}\in \Z_2$, where $\gamma_{(xy)}=0$ if and only if $\delta_{(xy)}=\be$.
	The value of $\delta_{(xy)}\in \{\be,\bu\}$ is such that $\delta_{(xy)}xy\in D_1$.
	The Hadamard matrix $H$ constructed using as rows the vectors $y\in D_1$ is a normalized Hadamard matrix.
	In general, for any $x,y\in C$ we can say that the value of the coordinate indexed by $x$ (or the complement when $x\notin D_1$) in vector $y$ is given by
	\begin{equation}\label{eq:gammas}
	\gamma_{x}+\gamma_{y}+\gamma_{(xy)}\in \Z_2.
	\end{equation}
	
	Analogously, from \Cref{coro:trans} the code $C^T$ is an $\HFP$-code of type Q with corresponding Hadamard group $(C,D_1^{-1},\bu)$. The coordinates of elements in $C^T$ are indexed by the elements in $D_1^{-1}$ and, with the same argumentation as for code $C$, we will take the coordinates ordered in such a way that the permutations associated to elements in $C^T$ coincides with those \Cref{prop:3.7}. The order is given by: $\be$, $\ba$, \ldots, $\ba^{2n-1}$, $\ba\bb, \ba^{2}\bb, \ldots, \ba^{2n}\bb$.
	% or, using the operation in $C$, by: $\be$, $\ba^{-1}$, \ldots, $\ba$, $\ba{-1}\bb, \ba^{-2}\bb, \ldots, \bb$.
	\qed
\end{remark}

Note that  both Hadamard groups $(C,D_1,\bu)$ and $(C,D_1^{-1},\bu)$ have the same elements and the same group structure. However, the binary representation of these elements, which depends on the Hadamard subset, could be different (in the first case we are talking about the rows of $H$ and in the second case about the columns).

Before going to the next proposition  we will take the elements of $C$ in a polynomial way, so the nonzero coefficients of $(a_1(x),a_2(x))$ are those in $\Supp(\ba)$, where $a_1(x), a_2(x)\in \F[x]$ have degree at most $2n-1$.
Permutation $\pi_a$ corresponds to multiplying by $x$ modulo $x^{2n}-1$ and so we have the correspondence $\ba^i \leftrightarrow \big((1+x+x^2+\ldots +x^{i-1})a_1(x),(1+x+x^2+\ldots +x^{i-1})a_2(x)\big)=\big(\frac{x^i-1}{x-1}a_1(x), \frac{x^i-1}{x-1}a_2(x)\big)$. Analogously, the nonzero coefficients of $(b_1(x),b_2(x))$ are those in $\Supp(\bb)$, where $b_1(x), b_2(x)\in \F[x]$ have degree at most $2n-1$.

Given a polynomial $p(x)$ of degree at the most $2n-1$, we will call $\varphi_1(p(x))=x^{2n-1}p(\frac{1}{x})$ and note that $\varphi_1(p(x))$ is not exactly the reciprocal polynomial of $p(x)$ but coincides with it in the case the degree of $p(x)$ is $2n-1$.

Denote by $u(x)$ the polynomial with all the coefficients 1 and by $e(x)$ the zero polynomial.

\begin{proposition}\label{prop:3.8}
	Let $C=\langle \ba,\bb \rangle$ be an $\HFP$-code of type Q and length $4n$. Then, if we know the value of $\ba$ then we can compute $\bb$ in a unique way, up to complement.
\end{proposition}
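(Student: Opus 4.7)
The strategy is to pass to the polynomial representation introduced just before the statement and translate each defining relation of the type Q group into a polynomial equation in $\F_2[x]/(x^{2n}-1)$ relating $\bb=(b_1(x),b_2(x))$ to $\ba=(a_1(x),a_2(x))$.

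First I would read off from \Cref{prop:3.7} how the permutations act on a pair $(p(x),q(x))$: $\pi_\ba$ is multiplication by $x$ on each coordinate, while $\pi_\bb$ swaps the two halves and reverses, that is, $\pi_\bb(p,q)=(\varphi_1(q),\varphi_1(p))$. From $\ba\cdot\ba^{-1}=\be$ one obtains $\ba^{-1}=(x^{-1}a_1(x),x^{-1}a_2(x))$ in the ring $\F_2[x]/(x^{2n}-1)$. Writing the type Q relation $\bb^{-1}\ba\bb=\ba^{-1}$ in the equivalent form $\ba\bb=\bb\ba^{-1}$ and expanding both sides through $x\cdot y=x+\pi_x(y)$ produces two linear equations of the form
\[ (x+1)\,b_1(x)=a_1(x)+x\,\varphi_1(a_2(x)), \qquad (x+1)\,b_2(x)=a_2(x)+x\,\varphi_1(a_1(x)). \]

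The key observation is that in $\F_2[x]/(x^{2n}-1)$ the annihilator of $x+1$ is exactly $\{e(x),u(x)\}$: from $x^{2n}-1=(x+1)u(x)$ and the identity $c(x)\,u(x)\equiv c(1)\,u(x)\pmod{x^{2n}-1}$. Hence each of the two equations has at most two solutions, differing by $u(x)$, giving four a priori candidates for $\bb$, namely $\bb$, $\bb+(u(x),e(x))$, $\bb+(e(x),u(x))$ and $\bb+\bu$. The last step is to impose the remaining relation $\bb^2=\bu$: computing $\bb^2=\bb+\pi_\bb(\bb)=(b_1+\varphi_1(b_2),\,b_2+\varphi_1(b_1))$ and using $\varphi_1(u)=u$, one sees that adding $u(x)$ in only one coordinate of $\bb$ flips both halves of $\bb^2$ by $u(x)$, turning $\bu$ into $\be$. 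So only $\bb$ and $\bb+\bu=\bb\bu$ survive, which is the desired uniqueness up to complement.

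Existence of at least one solution requires no proof because the code $C$ itself provides such a $\bb$. The main obstacle will be the careful bookkeeping of the reciprocal operator $\varphi_1$ and its interaction with powers of $x$ modulo $x^{2n}-1$ (in particular, the identity $\varphi_1(x^kp(x))=x^{-k}\varphi_1(p(x))$), together with verifying that the right-hand sides above actually lie in the image of multiplication by $x+1$; this consistency reduces, upon evaluating at $x=1$, to the parity condition that $\wt(\ba)=2n$ is even, which holds automatically since $\ba$ has Hadamard weight.
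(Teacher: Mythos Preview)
Your approach is essentially the paper's: both expand the relation $\ba\bb=\bb\ba^{-1}$ in the polynomial model and arrive at exactly the same pair of equations $(x+1)b_i(x)=a_i(x)+x\,\varphi_1(a_{i'}(x))$ (the paper even has a small slip in describing $\pi_\bb$ as fixing the two halves, but its derived equations agree with yours). The paper then simply remarks that, since $x-1\mid x^{2n}-1$, each $b_i$ is determined ``up to complement'' and stops there.

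You actually go one honest step further than the paper. Solving each coordinate independently leaves four candidates $\bb,\ \bb+(u,e),\ \bb+(e,u),\ \bb+\bu$, and you invoke the second defining relation $\bb^2=\bu$ together with $\varphi_1(u)=u$ to kill the two ``mixed'' candidates. The paper's proof does not record this reduction explicitly, so your argument is the more complete of the two; otherwise the route and the computations are the same.
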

\begin{proof}
	We will take the elements in $C$ in a polynomial way, so $(a_1(x),a_2(x))$ is the polynomial representation of $\ba$.
	Permutation $\pi_\bb$ takes $(a_1(x),a_2(x))$ to $(\varphi_1(a_1(x)),\varphi_1(a_2(x)))$.
	
	From $\ba\bb=\bb\ba^{-1}$ we can write $\bb+\pi_\ba(\bb)=\ba+\pi_\bb(\ba^{-1})=\ba+\pi_{\ba\bb}(\ba)$ (indeed, $\ba^{-1}=\pi_{\ba^{-1}}(\ba)$). Vector $\bb$ and $\pi_\ba(\bb)$ have the same parity, so $\ba+\pi_{\ba\bb}(\ba)$ has even parity. Using polynomials we have $(x+1)b_i(x)=a_i(x)+x\varphi_1(a_{i'}(x)) \pmod{x^{2n}-1}$, for $i,i'\in\{1,2\}$, $i\not=i'$.
	Polynomials $a_i(x)+x\varphi_1(a_{i'}(x)) \pmod{x^{2n}-1}$ are multiples of $x+1$ (indeed, $\ba+\pi_{\ba\bb}(\ba)$ has even parity). Hence, $b_i(x)=\frac{a_i(x)+x\varphi_1(a_{i'}(x)) \pmod{x^{2n}-1}}{x-1}$ for $i,i'\in\{1,2\}$, $i\not=i'$. Note that, as $x-1$ divides $x^{2n}-1$, the solution of the equation giving $b_i(x)$ is unique up to complement. \qed
\end{proof}

A rough bound for $r$, depending on $k$, is established in the following lemma.
\begin{lemma}\label{brk}
	Let $C$ be an Hadamard code of length $2^sn'$, where $n'$ is odd. The rank $r$ of $C$ fulfills $r\leq \frac{2^{s+1}n'}{2^k}+k-1$, where $k$ is the dimension of the kernel.
\end{lemma}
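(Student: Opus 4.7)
The plan is to exploit the coset decomposition of $C$ with respect to its kernel. Since $K(C)$ is a binary linear subspace of $\F^n$ of dimension $k$ (by definition), the kernel has size $2^k$, and the code $C$ of length $4n=2^s n'$ contains exactly $|C|=8n=2^{s+1}n'$ codewords. Because $K(C)$ acts on $C$ by translation (this is precisely the defining property of the kernel: $c+K(C)\subseteq C$ for every $c\in C$), the code $C$ is partitioned into a disjoint union of cosets of $K(C)$, and the number of such cosets is
\[
t \;=\; \frac{|C|}{|K(C)|} \;=\; \frac{2^{s+1}n'}{2^k}.
\]

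Now I would pick coset representatives $c_0=\be, c_1,\ldots,c_{t-1}$, one from each coset of $K(C)$ in $C$. Every codeword of $C$ can then be written as $c_i + z$ for some $z\in K(C)$, so the linear span satisfies
\[
\langle C\rangle \;\subseteq\; \langle c_1,\ldots,c_{t-1}\rangle + K(C).
\]
Taking dimensions and using that $\dim K(C)=k$,
\[
r \;=\; \dim\langle C\rangle \;\leq\; (t-1) + k \;=\; \frac{2^{s+1}n'}{2^{k}} + k - 1,
\]
which is exactly the claimed bound. (The $-1$ comes from not needing to include $c_0=\be$ among the generators.)

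There is no real obstacle; the argument is simply a dimension count based on the fact that $K(C)$ is a linear subspace and $C$ is a union of $|C|/|K(C)|$ of its cosets. The only subtlety worth flagging is that $K(C)$ being linear is guaranteed because $\be\in C$ (so that $K(C)$ is closed under addition and contains $\be$), which for a Hadamard code is automatic after the usual normalization.
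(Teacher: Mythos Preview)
Your argument is correct and is essentially the same as the paper's: both use the coset decomposition of $C$ with respect to $K(C)$, count $|C|/|K(C)|=2^{s+1}n'/2^k$ cosets, and bound the rank by $k$ plus one less than the number of cosets. Your write-up simply spells out the choice of representatives and the role of $c_0=\be$ a bit more explicitly than the paper does.
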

\begin{proof}
	Let $k$ be the dimension of the the kernel $K(C)$, then we can see $C$ as a disjoin union of, at the most, $\frac{2^{s+1}n'}{2^k}$ cosets of $K(C)$. Hence, the rank of $C$ will be, at the most, $r\leq \frac{2^{s+1}n'}{2^k}-1+k$.\qed
\end{proof}

\begin{lemma}\label{lemm:4.1}
	Let $C=\langle \ba, \bb\rangle$ be an $\HFP$-code of type Q and length $4n$. Let $h$ be a divisor of $n$. Then $\pi_\ba^h(\ba^n)\not= \ba^n$.
\end{lemma}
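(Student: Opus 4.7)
The plan is to argue by contradiction: assume $\pi_\ba^h(\ba^n)=\ba^n$ for some divisor $h$ of $n$, and iterate this to force $\pi_\ba^n(\ba^n)=\ba^n$, which I will independently show is false.

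The key computation I will perform first is to evaluate $\ba^n\cdot\ba^n$ in two different ways. On one hand, using the group law of type Q, $\ba^n\cdot\ba^n=\ba^{2n}=\bu$. On the other hand, by the propelinear multiplication and the fact (following from condition 2 of \Cref{def:propelinear} applied inductively) that $\pi_{\ba^m}=\pi_\ba^m$ for every $m\geq 1$, we have
\begin{equation*}
\ba^n\cdot\ba^n=\ba^n+\pi_{\ba^n}(\ba^n)=\ba^n+\pi_\ba^n(\ba^n).
\end{equation*}
Equating these gives $\pi_\ba^n(\ba^n)=\bu+\ba^n\neq \ba^n$, since $\bu\neq\be$. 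This settles the case $h=n$ and provides the target contradiction for all other cases.

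For a general positive divisor $h$ of $n$, suppose for contradiction that $\pi_\ba^h(\ba^n)=\ba^n$. Then applying $\pi_\ba^h$ repeatedly, a trivial induction on $k$ yields $\pi_\ba^{kh}(\ba^n)=\ba^n$ for every $k\geq 1$. Choosing $k=n/h$ (an integer since $h\mid n$) produces $\pi_\ba^{n}(\ba^n)=\ba^n$, directly contradicting the identity $\pi_\ba^n(\ba^n)=\bu+\ba^n$ established above.

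I do not anticipate a real obstacle here: the only nontrivial ingredient is recognizing that the propelinear axioms force $\pi_{\ba^m}=\pi_\ba^m$, which in turn lets one translate the group relation $\ba^{2n}=\bu$ into a relation between $\ba^n$ and $\pi_\ba^n(\ba^n)$ as binary vectors. Once that identity is in hand, the divisibility hypothesis $h\mid n$ is used only to close the iteration loop.
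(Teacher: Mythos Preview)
Your proof is correct and follows essentially the same approach as the paper: both assume $\pi_\ba^h(\ba^n)=\ba^n$, iterate $n/h$ times to obtain $\pi_\ba^n(\ba^n)=\ba^n$, and derive a contradiction from the propelinear identity $\ba^{2n}=\ba^n+\pi_\ba^n(\ba^n)=\bu$. The only cosmetic difference is that the paper writes the contradiction as $\bu=\ba^n+\ba^n=\be$ in one line, whereas you separate out the identity $\pi_\ba^n(\ba^n)=\ba^n+\bu$ first.
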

\begin{proof}
	Indeed, assume the contrary, so $\pi_\ba^h(\ba^n)=\ba^n$, where $n=hh'$. Hence, $\bu=\ba^{2n}=\ba^n+\pi_\ba^{n}(\ba^n)=\ba^n+(\pi_\ba^h)^{h'}(\ba^n)=\ba^n+\ba^n=\be$, which is impossible.\qed
\end{proof}

Next theorem is one of the main results in the paper. It summarizes the values of the rank and dimension of the kernel for $\HFP$-codes of type Q.
\begin{theorem}\label{rk}
	Let $C=\langle \ba,\bb \rangle$ be a non linear $\HFP$-code of type Q and length $4n=2^sn'$, where $n'$ is odd. Let $r,k$ be the rank and the dimension of the kernel of $C$, respectively.
	\begin{enumerate}
		\item If $s=2$ then $C$ is a full rank code, so $r=4n-1$ and $k=1$.
		\item If $s=3$ then $r=2n$ and $k\in\{1,2\}$.
		\item If $s>3$ then $r\leq 2n$ and $k\in\{1,2\}$.
	\end{enumerate}
\end{theorem}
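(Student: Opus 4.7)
The plan is to treat the three items in sequence, with items~(1) and~(2) following immediately from prior results and item~(3) being the substantive one.

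For item~(1), the conclusion is exactly \Cref{rank-kernel}(iii), applicable here since the theorem's hypotheses force $C$ to be nonlinear. For item~(2), the equality $r=2n$ is the $s=3$ case of \Cref{aske}(ii) (where equality holds), and $1\le k\le 2$ is \Cref{odd} with $s-1=2$.

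For item~(3), the bound $r\le 2n$ is again \Cref{aske}(ii). The substantive claim is $k\le 2$; note that \Cref{odd} alone only gives $k\le s-1$, which is much weaker than~$2$ for $s>3$. I would argue by contradiction: assume $k\ge 3$, so $|K(C)|\ge 8$ and $K(C)$ is simultaneously a $+$-linear subspace of $\F^{4n}$ and a $\cdot$-subgroup of the generalized quaternion group $C$. Every subgroup of a generalized quaternion group is itself cyclic or generalized quaternion and contains the unique involution $\bu$, so the $\cdot$-structure on $K(C)$ is one of these two types. I would split on whether $K(C)\cap\langle\ba\rangle=\langle\bu\rangle$ or strictly contains it.

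In the first subcase, every element of $K(C)\setminus\{\be,\bu\}$ is of the form $\ba^i\bb$. The type~Q relations give $(\ba^j\bb)^{-1}=\ba^{j+2n}\bb$ and $(\ba^i\bb)\cdot(\ba^j\bb)^{-1}=\ba^{i-j}$, which must lie in $K(C)\cap\langle\ba\rangle=\langle\bu\rangle$, forcing $j\equiv i$ or $j\equiv i+2n\pmod{4n}$. Thus only one coset of $\langle\bu\rangle$ of the form $\ba^*\bb$ meets $K(C)$, so $|K(C)|\le 4$, contradicting $k\ge 3$. In the second subcase, let $m$ be the smallest positive integer with $\ba^m\in K(C)$; by \Cref{rank-kernel}(ii), $m\ge 2$, and $K(C)\cap\langle\ba\rangle=\langle\ba^m\rangle$ with $m\mid 2n$ and $4n/m$ a power of~$2$. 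A quick $2$-adic count then yields $m\mid n$, so \Cref{lemm:4.1} applies. The $+$-closure of $K(C)$ forces each $\ba^{jm}+\ba^{lm}=\pi_\ba^{jm}(\ba^{(l-j)m})$ to lie in $K(C)$; via the polynomial representation set up before \Cref{prop:3.8}, these become identities of the shape $\tfrac{x^{lm}-x^{jm}}{x-1}a_i(x)\equiv\tfrac{x^{hm}-1}{x-1}a_i(x)\pmod{x^{2n}-1}$ for $i=1,2$. Combining enough such identities with \Cref{lemm:4.1} and with the normalization $u(x)a_i(x)\equiv u(x)$ (encoding $\bu\in C$) should produce an overdetermined system on $a_1,a_2$ with no solution.

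The main obstacle is finishing the second subcase cleanly. A promising route is to pass to the CRT decomposition $\F[x]/(x^{2n}-1)\cong\prod_{d\mid n'}\F[x]/\Phi_d(x)^{2^{s-1}}$, valid because $x^{2n}-1=(x^{n'}-1)^{2^{s-1}}$ in characteristic~$2$, and to carry out a valuation calculation in each local factor. The hypothesis $s>3$ enters here to provide the $2$-adic margin that lets \Cref{lemm:4.1} force the contradiction.
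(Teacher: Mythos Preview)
Your handling of items~(1) and~(2) is correct and matches the paper. For item~(3), your first subcase (where $K(C)\cap\langle\ba\rangle=\langle\bu\rangle$) is exactly the paper's concluding step. The gap is the second subcase, which you yourself flag as unfinished. You correctly reduce to $m\mid n$, hence $\ba^n\in K(C)$, but then you only say ``\Cref{lemm:4.1} applies'' without extracting a contradiction, and the polynomial identities you propose are not justified as written: the element $\pi_\ba^{jm}(\ba^{(l-j)m})$ lies in $K(C)$ but need not lie in $\langle\ba\rangle$, so there is no reason it equals some $\ba^{hm}$. The CRT/valuation plan at the end is purely speculative.

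The paper closes this subcase with a short orbit argument, avoiding polynomials entirely. Assume $\ba^n\in K(C)$. By \Cref{prop:3.2}, $\pi_\ba$ is a linear automorphism of $K(C)$, and since weight is preserved the $\pi_\ba$-orbit of $\ba^n$ lies in $K(C)\setminus\{\be,\bu\}$; hence the minimal period $i$ with $\pi_\ba^i(\ba^n)=\ba^n$ satisfies $i\le 2^k-2\le 2^{s-1}-2$ via \Cref{odd}. On the other hand $\pi_\ba^n(\ba^n)=\ba^n+\bu$ and $\pi_\ba^{2n}(\ba^n)=\ba^n$; writing $d=\gcd(i,n)=\lambda i+\mu n$ gives $\pi_\ba^d(\ba^n)=\ba^n+\mu\bu$, so $2d\ge i$. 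This is exactly where \Cref{lemm:4.1} enters: it rules out $d=i$ (else $i\mid n$), forcing $2d=i$ with $d\mid n$ but $2d\nmid n$. Since $n=2^{s-2}n'$ this forces $d=2^{s-2}n^*$ for some $n^*\mid n'$, whence $i=2d\ge 2^{s-1}$, contradicting the orbit bound. Once $\ba^n\notin K(C)$ is established, it follows that $K(C)\cap\langle\ba\rangle=\langle\bu\rangle$, and your first-subcase argument finishes the proof.
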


\begin{proof}
	The first and second items, as far as the rank is concerned, comes from  Proposition~\ref{rank-kernel} and \Cref{aske}. For the dimension of the kernel in the second item we use Lemma~\ref{brk} and \Cref{odd}.
	
	For the third item, we begin by showing that $\ba^n \notin K(C)$. Assume the contrary. From \Cref{prop:3.2}, $\pi_\ba \in \Aut(K(C))$ and also, since $K(C)$ is a linear space and $\pi_\ba$ is a linear morphism, $\pi_\ba$ is a linear isomorphism of $K(C)$. We have $|K(C)|=2^k$. The amount of available values for $\pi_\ba(\ba^n)$ is upper bounded by $2^k-2$ so, if $i$ is the smallest index $i\leq 2n$ such that $\pi_\ba^i(\ba^n)=\ba^n$ then (\Cref{odd}),
	\begin{equation}\label{eq:3}
	i\leq 2^k-2 \leq 2^{s-1}-2.
	\end{equation}
	We know that $\bu=\ba^{2n}=\ba^n+\pi_{\ba^n}(\ba^n)$. We have $\pi_\ba^n(\ba^n)=\ba^n+\bu$ and $\pi_\ba^{2n}(\ba^n)=\ba^n$. Set $d=\gcd(i,n)$ with $d=\lambda i+\mu n$, for some integers $\lambda, \mu$. Compute $\pi_\ba^d(\ba^n)= \ba^n +\delta \bu$, where $\delta$ has the value $0,1$, depending on the parity of $\mu$ is either even or odd, respectively. Hence, $2d\geq i$ and $d$ is a proper divisor of $i$ (otherwise, $i$ would be a divisor of $n$, which is impossible from \Cref{lemm:4.1}) and therefore $2d=i$.\\
	Hence, $d$ is a divisor of $n$ and $2d=i$ is not a divisor of $n$. As $n=2^{s-2}n'$ we have $d=2^{s-2}n^*$, where $n^*\,|\, n'$. Finally, $i=2d=2^{s-1}n^*\geq 2^{s-1}$, which contradicts (\ref{eq:3}). This proves that $\ba^n \notin K(C))$.
	
	The order of elements in $K(C)$ has to be a power of two (indeed, the order of the group $K(C)$ is a power of two). Thus, if $\ba^i\in K(C)$ then take $j=\gcd(i,4n)$ and note that also $\ba^j \in K(C)$, where $j$ divides $4n$ . Therefore, for some $\nu$, $2^\nu j =4n$. Hence, as $\ba^n\notin K(C)$ we should have $j\in \{2n,4n\}$ and also $i\in\{2n,4n\}$. Therefore,  the elements in $K(C)$ different from $\be, \bu$ should be of the form $\ba^i\bb$. But if two of them, say $\ba^i\bb, \ba^j\bb$, with $i\not= j$ and $i\not=2n+j$, are in $K(C)$ then also $\ba^{i-j}\in K(C)$, which is impossible.
	
	Finally, the kernel is generated, at the most, by only one element different from $\bu$, say it is $\bkappa=\ba^{\iota}\bb$, for some $\iota\in \{0,1,\ldots,2n-1\}$. The dimension of the kernel is 1 or $2$. For the rank, the statement comes from \Cref{aske}.\qed
\end{proof}

\section{\boldmath{$\HFP$}-codes of type Q. The transpose}\label{sec:4}

Next step is to go further with the specific $\HFP$-codes of type Q and length $4n=2^sn'$, where $n'$ is odd. We show that the transpose of an $\HFP$-code of type Q and dimension of the kernel $k=2$ has always dimension of the kernel equal to 1.

\begin{proposition}\label{ker}
	Let $C=\langle \ba,\bb \rangle$ be an $\HFP$-code of type Q and length $4n$.  Assume that the permutations associated to the elements $\ba,\bb$ are those in \Cref{prop:3.7}.
	If the dimension of the
	kernel is $k=2$ then the vector $\bkappa$ in the kernel, different from $\be,\bu$, with a zero in the first coordinate is
	\begin{equation*}
	\bkappa=(\bv||\bw),\, \mbox{where }\, \bv=(0,1,0,1,\ldots,0,1)\, \mbox{and either}\,\, \bw=\bv \, \mbox{or}\,\, \bw=\bv+\bu.
	\end{equation*}
\end{proposition}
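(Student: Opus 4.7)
The plan is to restrict the action of $\pi_\ba$ on the kernel, split into two cases according to what $\pi_\ba(\bkappa)$ can be, and rule out the one that does not match the claim.

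First, I would observe that \Cref{prop:3.2} gives $\pi_\ba \in \Aut(K(C))$, so $\pi_\ba(\bkappa) \in K(C) = \{\be, \bu, \bkappa, \bkappa+\bu\}$. Since $\pi_\ba$ preserves Hamming weight and $\wt(\bkappa) = 2n$, we conclude $\pi_\ba(\bkappa) \in \{\bkappa, \bkappa+\bu\}$. Writing $\bkappa = (\bv\|\bw)$ with $\bv, \bw \in \F^{2n}$, and noting that $\pi_\ba$ acts as a cyclic right shift $\sigma$ on each half (\Cref{prop:3.7}), the two possibilities become: either (constant case) $\sigma(\bv) = \bv$ and $\sigma(\bw) = \bw$, forcing $\bv$ and $\bw$ to be constant; or (alternating case) $\sigma(\bv) = \bv + \bu$ and $\sigma(\bw) = \bw + \bu$ (where $\bu$ now denotes the all-ones vector of length $2n$), forcing $\bv$ and $\bw$ to alternate.

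The main obstacle will be excluding the constant case. If it held, $v_1 = 0$ and $\bkappa \neq \be$ would give $\bkappa = (\be\|\bu)$. Since $k=2$ requires $s \geq 3$ (\Cref{rk}) and hence $n \geq 2$, the codewords $\ba, \bkappa, \bkappa+\bu$ are pairwise distinct and the Hadamard property yields $d(\ba, \bkappa) = 2n$. I would then split $\ba = (\ba_1\|\ba_2)$ and compute $\wt(\ba+\bkappa) = \wt(\ba_1) + (2n - \wt(\ba_2))$; combined with $\wt(\ba_1) + \wt(\ba_2) = 2n$, this forces $\wt(\ba_1) = \wt(\ba_2) = n$. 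On the other hand, in the polynomial representation set up just before \Cref{prop:3.8}, the identity $\ba^{2n} = \bu$ becomes $u(x)\,a_1(x) \equiv u(x) \pmod{x^{2n}+1}$ in $\F_2[x]$; using $x^{2n}+1 = (x+1)u(x)$ and cancelling $u(x)$ in the integral domain $\F_2[x]$ yields $(x+1) \mid (a_1(x)+1)$, hence $a_1(1) = 1$, meaning $\wt(\ba_1)$ is odd. So $n = \wt(\ba_1)$ would be odd, contradicting $n = 2^{s-2}n'$ with $s \geq 3$.

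The alternating case must therefore hold. From $\sigma(\bv) = \bv + \bu$ and $v_1 = 0$, the recurrence $\bv_i = \bv_{i-1} + 1$ inductively gives $\bv = (0, 1, 0, 1, \ldots, 0, 1)$; the same argument applied to $\bw$ yields $\bw \in \{(0,1,\ldots,0,1),(1,0,\ldots,1,0)\} = \{\bv, \bv + \bu\}$, which is the stated conclusion.
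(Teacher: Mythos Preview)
Your proof is correct, and the overall architecture matches the paper's: use $\pi_\ba\in\Aut(K(C))$ to force $\pi_\ba(\bkappa)\in\{\bkappa,\bkappa+\bu\}$, split into the ``constant'' and ``alternating'' cases, and eliminate the former. The difference lies entirely in how the constant case $\bkappa=(\be\|\bu)$ is excluded.

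The paper argues structurally: it invokes \Cref{proj} to see that projecting $C$ onto $\Supp(\bkappa)$ (the second block of $2n$ coordinates) yields an Hadamard code of length $2n$; this projected code inherits a propelinear structure generated by the projection of $\ba$ alone, hence is a \emph{cyclic} group of order $4n$, contradicting \Cref{lemm:cyclic}. Your argument is instead a direct parity computation: from $d(\ba,\bkappa)=2n$ and $\wt(\ba)=2n$ you get $\wt(\ba_1)=n$, while the identity $\ba^{2n}=\bu$ in polynomial form forces $\wt(\ba_1)$ to be odd (this is exactly the observation the paper later records in the proof of \Cref{prop:double1}), and $k=2\Rightarrow s\ge 3\Rightarrow n$ even gives the contradiction. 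Your route is more elementary and self-contained, avoiding both \Cref{proj} and \Cref{lemm:cyclic}; the paper's route is more conceptual and reusable. One small point: the sentence ``$n\ge 2$, the codewords $\ba,\bkappa,\bkappa+\bu$ are pairwise distinct'' is not quite the justification you need; what you actually use is $\ba\notin\{\bkappa,\bkappa+\bu\}$, which follows because $\ba\notin K(C)$ (\Cref{rank-kernel}(ii), $C$ nonlinear) or simply because a constant-on-each-half vector would satisfy $\pi_\ba(\ba)=\ba$ and hence $\ba^2=\be$, impossible for $\ba$ of order $4n$.
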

\begin{proof}
	Since the dimension of the kernel is 2, there exists an element $\bkappa\in C$, different from $\bu$, belonging to the kernel.
	Since $\bkappa^2$ is also in the kernel and it is different from $\bkappa$ or $\bkappa\bu$ we have $\bkappa^2=\bu$ or $\bkappa^2=\be$.
	Then, the associated permutation $\pi_{\bkappa}$ should be such that $\pi_{\bkappa}^2=I$ and so either $\bkappa=\ba^n$ or $\bkappa=\ba^i\bb$, for some $i\in \{0,\ldots,2n-1\}$.
	
	From \Cref{prop:3.2} we have $\pi_\ba\in \Aut(K(C))$ and so $\pi_\ba(\bkappa)\in\{\bkappa,\bkappa\bu\}$.
	As $n$ is even we have $\pi_\ba^n(\bkappa)=\bkappa$ so, from \Cref{lemm:4.1}, $\bkappa=\ba^i\bb$, for some $i\in\{0,1,\ldots,2n-1\}$. Again, from \Cref{prop:3.2}, we have $\pi_\ba(\bkappa)=\bkappa+\epsilon$,
	where $\epsilon\in\{\be,\bu\}$. Hence, we are in one of the following cases:
	\begin{enumerate}[i)]
		\item $\bkappa =(1,1,\ldots,1||0,0,\ldots,0)+\epsilon$,
		\item $\bkappa=(\bv||\bw)$, where $\bv,\bw\in  \{(1,0,1,0,\ldots, 1,0),(0,1,0,1,\ldots,0,1)\}$.
	\end{enumerate}
	Note that the first item could not happen. Indeed, by projecting the vectors to the  first $2n$ coordinates, we obtain an Hadamard code of length $2n$ (\Cref{proj}).
	This code is a cyclic group of order $4n$ generated by the projection of $\ba$, which contradicts \Cref{lemm:cyclic}.
	This proves the statement. \qed
\end{proof}

Summarizing the above results, we conclude with the following Proposition.

\begin{proposition}\label{prop:typeR}
	Let $C=\langle \ba,\bb \rangle$ be an $\HFP$-code of type Q, length $4n$ and dimension of the kernel $k=2$.  Let $\bkappa$ be the vector in the
	kernel, different from e,u, with a zero in the first coordinate.
	Then, up to equivalence,
	\begin{enumerate}[i)]
		\item $\pi_\ba=(1,2,\ldots,2n)(2n+1,2n+2,\ldots,4n)$,
		\item $\pi_\bb=(1,4n)(2,4n-1)\cdots(2n,2n+1)$,
		\item $\bkappa=\ba^\iota\bb$, up to complement, for some $\iota\in \{0,\ldots,2n-1\}$ and either $\bkappa=(\bv||\bv)\in K(C)$, when $\iota$ is even, or $\bkappa=(\bv||\bw)\in K(C)$, when $\iota$ is odd, where $\bv=(0,1\ldots,0,1)$, $\bw=\bv+\bu$.
		\item $\ba=(\ba_1||\ba_2)$ where, in polynomial way, $a_2(x)=x^{\iota+1}\varphi_1(a_{1}(x))+u(x)$, $u(x)$ is the polynomial of degree $2n-1$ with all the coefficients 1, and $\iota$ is the exponent in the above item $iii)$.
	\end{enumerate}
\end{proposition}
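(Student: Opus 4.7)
The plan is to observe that items (i) and (ii) are precisely the content of \Cref{prop:3.7}, already established, while (iii) is essentially \Cref{ker} refined by a short parity argument, so the bulk of the work lies in (iv).

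For (iii), I would invoke \Cref{ker} to obtain $\bkappa \in K(C)\setminus\{\be,\bu\}$ of the form $(\bv||\bv)$ or $(\bv||\bw)$, and from the structure argument inside its proof write $\bkappa = \ba^\iota\bb$ (up to complement) with $\iota\in\{0,\ldots,2n-1\}$. The group relations force $\bkappa^2=\bu$, hence $\pi_\bkappa(\bkappa) = \bkappa+\bu$. Since $\pi_\bkappa = \pi_\ba^\iota\pi_\bb$ sends a polynomial pair $(p(x),q(x))$ to $(x^\iota\varphi_1(q(x)),\, x^\iota\varphi_1(p(x)))$, evaluating at $(\bv||\bv)$ and $(\bv||\bw)$ and equating the result to $\bkappa+\bu$ pins down the parity: the case $(\bv||\bv)$ forces $\iota$ even (so that multiplication by $x^\iota$ cyclically permutes the even exponents and fixes $w(x)$), while $(\bv||\bw)$ forces $\iota$ odd (so that $x^\iota$ swaps $v(x)$ and $w(x)$).

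For (iv), I would work throughout in $\F_2[x]/(x^{2n}-1)$, writing $\ba = (a_1(x),a_2(x))$. \Cref{prop:3.8} supplies $(x-1)b_i(x) = a_i(x) + x\varphi_1(a_{i'}(x))$ for $\{i,i'\}=\{1,2\}$. Computing $\bkappa = \ba^\iota\cdot\bb = \ba^\iota + \pi_{\ba^\iota}(\bb)$ with $\ba^\iota = \bigl(\tfrac{x^\iota-1}{x-1}a_1,\, \tfrac{x^\iota-1}{x-1}a_2\bigr)$ and $\pi_{\ba^\iota}(\bb) = (x^\iota b_1,\, x^\iota b_2)$, the $i$-th half collapses, after the $2x^\iota a_i$ term vanishes in characteristic two, to $\tfrac{a_i(x)+x^{\iota+1}\varphi_1(a_{i'}(x))}{x-1}$.

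Equating the second half with $v(x)$ when $\iota$ is even and with $w(x)=v(x)+u(x)$ when $\iota$ is odd, and multiplying through by $(x-1)$, the identities $(x-1)v(x)=u(x)$ (by telescoping, since $v(x)=x+x^3+\cdots+x^{2n-1}$) and $(x-1)u(x)=0$ (because $xu(x)=u(x)\pmod{x^{2n}-1}$) collapse both parity cases to the single equation $a_2(x)+x^{\iota+1}\varphi_1(a_1(x)) = u(x)$, which rearranges to the stated formula $a_2(x) = x^{\iota+1}\varphi_1(a_1(x)) + u(x)$. The main obstacle is the polynomial bookkeeping; the crucial observation is that multiplication by $(x-1)$ kills the parity-dependent difference $(x-1)(v(x)+w(x)) = (x-1)u(x) = 0$ in $\F_2[x]/(x^{2n}-1)$, so both parities yield the same relation.
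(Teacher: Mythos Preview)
Your argument is correct. Items (i)--(iii) follow the paper's route essentially verbatim: (i) and (ii) are \Cref{prop:3.7}, and (iii) is \Cref{ker} plus the parity check via $\bkappa^2=\bu$ and the action of $\pi_{\ba^\iota}\pi_\bb$ on the alternating vector, exactly as the paper does.

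For (iv) you take a genuinely different route. The paper does \emph{not} quote the formula $(x-1)b_i=a_i+x\varphi_1(a_{i'})$ from \Cref{prop:3.8}; instead it reruns the \Cref{prop:3.8} argument with $\bkappa$ in place of $\bb$: from the group relation $\ba\bkappa=\bkappa\ba^{-1}$ one gets $\bkappa+\pi_\ba(\bkappa)=\ba+\pi_{\ba^{\iota+1}}\pi_\bb(\ba)$, and since the alternating form of $\bkappa$ makes the left side equal to $\bu$ directly, the polynomial identity $u(x)=a_i(x)+x^{\iota+1}\varphi_1(a_{i'}(x))$ drops out in one line with no case split on parity. Your approach instead feeds the output of \Cref{prop:3.8} into the product $\ba^\iota\cdot\bb$, equates with the known form of $\bkappa$, and then multiplies by $(x-1)$ to collapse the two parity cases via $(x-1)v(x)=u(x)$ and $(x-1)u(x)=0$. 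Both are valid; the paper's version is a bit slicker because the relation $\bkappa+\pi_\ba(\bkappa)=\bu$ absorbs the parity distinction automatically and avoids the intermediate computation of $\bb$, while your version has the merit of being a straightforward reuse of an already-proved formula rather than a reprise of its proof technique.
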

\begin{proof}
	Items \textit{i), ii)} are straightforward from the previous results.
	
	For item $iii)$, note that the two non trivial elements in the kernel are $\bkappa=\ba^\iota\bb$ and $\bkappa\bu$, for some index $\iota\in \{0,\ldots, 2n-1\}$.
	Take the vector $\bkappa=\ba^\iota\bb$ (the first coordinate may be zero or one). We know that $\bkappa^2=\bu$ and so $\bu=\bkappa+\pi_{\bkappa}(\bkappa)=\bkappa+\pi_{\ba^{\iota}}\pi_{\bb}(\bkappa)$. Hence, if $\iota$ is even then, up to complement, $\kappa=(\bv||\bv)$ and if $\iota$ is odd then $\kappa=(\bv||\bw)$, where $\bv =(0,1,\ldots, 0,1), \bw= (1,0,\ldots,1,0)\}$.
	
	Item $iv)$ comes from the same argumentation as in \Cref{prop:3.8}, where instead of $\bb$ we use $\ba^\iota\bb$.
	Indeed, from $\bkappa\ba^{-1}=\ba\bkappa$ we obtain $\bkappa+\pi_{\bkappa}(\ba^{-1})=\ba+\pi_\ba(\bkappa)$, so $\bkappa+\pi_\ba(\bkappa)=\ba+\pi_{\bkappa}(\ba^{-1})=\ba+\pi_{\ba^\iota}\pi_{\bb}(\ba^{-1})=\ba+\pi_{\ba^\iota}\pi_{\bb}\pi_{\ba^{-1}}(\ba)= \ba+\pi_{\ba^{\iota+1}}\pi_{\bb}(\ba)$. But, $\bu=\bkappa +\pi_{\ba}(\bkappa)$ so, in a polynomial way, $u(x)=a_1(x)+x^{\iota+1}\varphi_1(a_{2}(x))$ and  $a_1(x)=x^{\iota+1}\varphi_1(a_{2}(x))+u(x)$. \\
	Analogously,  $a_2(x)=x^{\iota+1}\varphi_1(a_{1}(x))+u(x)$. This proves the statement.\qed
\end{proof}

\begin{theorem}\label{prop:trans}
	Let  $C=\langle \ba,\bb \rangle$ be an $\HFP$-code of type Q and length $4n$ and dimension of the kernel $k=2$.
	Then, the dimension of the kernel of the transpose $\HFP$-code is 1.
\end{theorem}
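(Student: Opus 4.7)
The plan is to argue by contradiction: assume $k(C^T)=2$ and derive a contradiction from the rigid structure that \Cref{prop:typeR} imposes on both $C$ and $C^T$ simultaneously. By \Cref{coro:trans} and the discussion in \Cref{order}, $C^T$ is itself an $\HFP$-code of type Q, so \Cref{prop:typeR} applies to $C^T$ as well. Hence, in addition to $\bkappa=\ba^{\iota}\bb\in K(C)$ of shape $(\bv\|\bv)$ or $(\bv\|\bw)$ and the identity $a_{2}(x)=x^{\iota+1}\varphi_{1}(a_{1}(x))+u(x)$ for $\ba$ in $C$, we obtain an analogous kernel element $\bkappa'$ of $C^T$ (with alternating shape expressed in $C^T$'s coordinates) and an identity $a'_{2}(x)=x^{\iota'+1}\varphi_{1}(a'_{1}(x))+u(x)$ for the generator $\ba'$ of $C^T$, for some $\iota'\in\{0,\ldots,2n-1\}$.

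Next, I would reinterpret both statements as constraints on the single Hadamard matrix $H$ built from $C$ in \Cref{hgtohfp}. The row of $H$ indexed by $\bkappa$ carries its alternating pattern read along the column-ordering of $C$ from \Cref{order}, while the column of $H$ indexed by $\ba^{\iota'}\bb$ carries an analogous alternating pattern read along the row-ordering for $C^T$ from \Cref{order} --- the two orderings traverse $\langle\ba\rangle$ in opposite directions. Using the entry formula $H[a,b]=\gamma_{b*a}$ together with the type Q relations $\bb\ba=\ba^{-1}\bb$ and $\bb^{2}=\ba^{2n}$, both patterns translate into systems of $\F_2$-linear conditions on $\gamma\colon C\to\F_2$; writing them out one obtains, from the second half of row $\bkappa$,
\[
\gamma_{\ba^{i}\bb}+\gamma_{\ba^{\,i-\iota+2n}}\equiv i+\iota-1\pmod{2}\qquad(i=1,\ldots,2n),
\]
together with a parallel system from column $\bkappa'$ obtained by swapping $\iota$ with $\iota'$ and reversing the direction of the index, plus analogous first-half systems that fix $\iota\equiv\iota'\pmod{2}$.

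The hard part will be extracting a genuine contradiction from these two sets of conditions. My plan is to combine them with the reversal identities of \Cref{prop:typeR}(iv) for $\ba$ and for $\ba'$. The two reversal identities constrain $\ba$ (a row of $H$) and $\ba'$ (a column of $H$) in dual ways, and since $\ba'$ is itself determined by $\ba$ and $\gamma$ through the construction of \Cref{hgtohfp}, one can eliminate $\ba'$ and turn the two polynomial conditions, together with the two pattern systems above, into a single palindromic identity on the support of $\ba$. Combined with \Cref{lemm:4.1}, I expect this palindromic identity to force $\pi_{\ba}^{h}(\ba^{n})=\ba^{n}$ for some proper divisor $h$ of $n$, contradicting the fact that $\ba$ has order $4n$. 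The main technical obstacle will be the bookkeeping needed to compose the two polynomial identities of \Cref{prop:typeR}(iv) while correctly accounting for the opposite orientations of the two $D_1$-orderings in \Cref{order}; once that translation is done, the algebra modulo $x^{2n}-1$ should deliver the contradiction in a few lines.
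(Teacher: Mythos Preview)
Your setup is sound --- assuming $k(C^T)=2$ and invoking \Cref{prop:typeR} on both sides is exactly how the paper begins --- but from there your plan diverges from the paper and, as written, has a real gap.

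First, a factual slip: the paper's computation shows the parities of $\iota$ and $\bar\iota$ are \emph{different}, not equal. Using \cref{eq:gammas} on the second-half coordinates of the row $\ba^\iota\bb$ and the column $\ba^{\bar\iota}\bb$, and equating the two (which is what ``both kernel vectors have the alternating shape of \Cref{ker}'' forces when the parities agree), one obtains $\gamma_{\ba^{j-\iota}}+\gamma_{\ba^{\bar\iota-j}}$ constant in $j$; evaluating at $j=\tfrac{\iota+\bar\iota}{2}$ and at $j=\tfrac{\iota+\bar\iota\pm 2n}{2}$ gives contradictory values. So your ``first-half systems that fix $\iota\equiv\iota'\pmod 2$'' is backwards, and any argument built on same parity will not close.

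Second, the heart of your plan --- combining the two reversal identities of \Cref{prop:typeR}(iv) into a ``palindromic identity on the support of $\ba$'' and then concluding $\pi_\ba^{h}(\ba^n)=\ba^n$ for some divisor $h$ of $n$ --- is not a proof yet; it is a hope. There is no mechanism explained for why those polynomial congruences modulo $x^{2n}-1$ would pin down a period of $\ba^n$, and \Cref{lemm:4.1} only rules out divisors of $n$, so you would also need to control which $h$ you land on. The paper does not attempt anything like this.

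What the paper actually does after establishing $\iota\not\equiv\bar\iota\pmod 2$ (say $\iota$ odd, $\bar\iota$ even) is structural rather than polynomial. From \Cref{prop:typeR}(iii) one reads off that the rows $\ba^j$ with $j$ even and $\ba^j\bb$ with $j$ odd have identical left and right halves; projecting these $2n$ rows onto the first $2n$ coordinates yields an Hadamard matrix $E$, and $\bkappa_1$ survives in its kernel. Repeating the same observation on the \emph{columns} of $E$ (now using that $\bar\iota$ is even) and projecting onto the odd columns produces an Hadamard matrix $F$ of order $n$ whose rows are exactly the projections of $\{\ba^{2j}\}$; the shift $\pi_{\ba^2}$ respects this projection, so $F$ carries a cyclic propelinear structure of order $2n$. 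That contradicts \Cref{lemm:cyclic}. If you want to repair your argument, this double-projection route is the one to take; the contradiction comes from \Cref{lemm:cyclic}, not from \Cref{lemm:4.1}.
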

\begin{proof}
	Let $H$ be a normalized Hadamard matrix where the rows are elements of $C$ and, from \Cref{order}, the coordinates of these elements are indexed by the elements in $D_1$ (the set of vectors with a
	zero in the first coordinate) with the order given by: $\be$, $\ba^{-1}$, \ldots, $\ba^{-(2n-1)}$, $\ba\bb, \ba^{2}\bb, \ldots, \ba^{2n}\bb$.
	The columns of $H$ are elements of $C^T$ (\Cref{coro:trans}) and their coordinates are indexed by the elements in $D_1^{-1}$ with the corresponding order:  $\be$, $\ba$, \ldots, $\ba^{(2n-1)}$, $\ba \bb, \ba^{2} \bb, \ldots, \ba^{2n} \bb$.
	
	Now, we assume that the dimension of the kernel is 2 in both codes $C$ and $C^T$ so we can use the results in \Cref{prop:typeR} for both codes.
	We prove that this assumption leads to us to a contradiction.
	
	Let $\bkappa_1= \ba^{\iota}\bb  \in K(C)$ (respectively, $\bkappa_2=\ba^{\bar{\iota}}\bb \in K(C^T)$) the vector in the kernel of $C$ (respectively, $C^T$) different from $\be,\bu$ and with a zero in the first coordinate.
	
	First of all we are going to see that the parity of $\iota, \bar{\iota}$ is different from each other.
	Deny the proposal and assume that $\iota, \bar{\iota}$ have the same parity.
	So, the row indexed by $\ba^\iota\bb$ and the column indexed by $\ba^{\bar\iota}\bb$ are equal. Consider the row indexed by $\ba^\iota\bb$ and compute the value of its coordinates, indexed by all different $\ba^j\bb$, for $j\in \{0,\cdots,2n-1\}$. From \cref{eq:gammas} these values are given by $\gamma_{\ba^\iota\bb}+\gamma_{\ba^j\bb}+\gamma_{(\ba^j\bb)(\ba^\iota\bb)}=
	\gamma_{\ba^\iota\bb}+\gamma_{\ba^j\bb}+\gamma_{\ba^{j-\iota}}+\gamma_{\bu}$. Row vectors $\ba^j\bb$, in the coordinate indexed by $\ba^{\bar\iota}\bb$, have the following value: $\gamma_{\ba^{\bar\iota}\bb}+\gamma_{\ba^j\bb}+\gamma_{(\ba^{\bar{\iota}}\bb)(\ba^j\bb)}=
	\gamma_{\ba^{\bar\iota}\bb}+\gamma_{\ba^j\bb}+\gamma_{\ba^{\bar{\iota}-j}}+\gamma_{\bu}$.
	
	Assuming that the row indexed by $\ba^\iota\bb$ coincides with the column indexed by $\ba^{\bar\iota}\bb$ we would have that these two previous values coincides, so:
	\begin{equation}\label{eq:gamma1}
	\gamma_{\ba^{j-\iota}}+\gamma_{\ba^{\bar{\iota}-j}}=\gamma_{\ba^{\bar\iota}\bb}+\gamma_{\ba^\iota\bb},\,\,\,
	\mbox{for all $j\in \{0,\ldots, 2n-1\}$}.
	\end{equation}
	Now, as $\iota, \bar{\iota}$ have the same parity we can take $j =\frac{\iota+{\bar\iota}}{2}$ in \cref{eq:gamma1} to obtain \begin{equation}\label{eq:con}
	\gamma_{\ba^{\bar\iota}\bb}+\gamma_{\ba^\iota\bb}=0.
	\end{equation}
	If $\iota+{\bar\iota}\leq 2n$ (respectively, $\iota+{\bar\iota}\geq 2n$) then taking $j=\frac{\iota+{\bar\iota}+2n}{2}$ in \cref{eq:gamma1} (respectively, $j=\frac{\iota+{\bar\iota}-2n}{2}$) we obtain $\ba^{j-\iota}= \ba^{{\bar \iota}-j}\ba^{2n}$ and so $\gamma_{\ba^{\bar\iota}\bb}+\gamma_{\ba^\iota\bb}=1$, which contradicts \cref{eq:con}.
	Hence, we conclude that it could not happen that the row indexed by $\ba^\iota\bb$ and the column indexed by $\ba^{\bar\iota}\bb$ coincides. Therefore, the parity of $\iota, \bar{\iota}$ is different from each other.

	Since $\bkappa_1$ belongs to the kernel of $C$ the vectors in $C$ with a zero in the first coordinate can be separated into two disjoint classes, the class $A_1=\{\gamma_{\ba^j}\ba^i\,:\, 0 \leq j \leq 2n-1\}$ and the class $A_2=\ba^\iota \bb + A_1$. Both classes are defined up to complement. Analogously for $C^T$.
	
	Without loss of generality, we can assume that $\iota$ is odd and ${\bar \iota}$ is even.
	From \Cref{prop:typeR}, in matrix $H$ 
	row vectors $\ba^j$, for even $j$, have the same coordinates in both halves and also vectors $\ba^j\bb$, for odd $j$, have the same coordinates in both halves.
	Therefore, projecting each of these vectors over the first half part we obtain $2n$ vectors of length $2n$ and weight $n$. Furthermore, the distance between them is also $n$ and so they form an Hadamard matrix $E$.
	One of the rows of $E$ is $\bkappa_1^{(p)}$, the projection of vector $\bkappa_1 = \ba^{\iota}\bb$ over the first half of coordinates. Vector $\bkappa_1^{(p)}$ is in the kernel of the code given by $E$. Indeed, from \Cref{prop:3.2} we have $\ba^j + K(C)= \ba^j\cdot K(C)$ and so $\ba^j + \bkappa_1 =\ba^j\ba^{\iota}\bb$ (up to complement). Hence, for even $j$, $(\ba^j)^{(p)} + \bkappa_1^{(p)}= (\ba^{j+\iota}\bb)^{(p)}$, which is a row of matrix $E$ (up to complement).
	
	Now, we repeat the operation using column vectors of $E$. Column vectors corresponding to odd columns have the same coordinates in both halves. Hence, projecting each of these vectors over the first half part we obtain $n$ vectors of length $n$ and weight $n/2$. The distance between them is also $n/2$ and so they form an Hadamard matrix $F$.
	The rows of $F$ are the projections of $\ba^j$, for even $j\in \{0,2, \ldots, 2n-2\}$ over the coordinates indexed by $\ba^j$, for even $j\in \{2, \ldots, 2n\}$. 
	If $\ba^j=(a_1^{(j)}, a_2^{(j)},\ldots, a_{4n}^{(j)})$ is a row in $H$, then $\ba_p^j=(a_1^{(j)}, a_3^{(j)},\ldots, a_{2n-1}^{(j)})$ is the corresponding row in $F$.

	The group structure of the given code $\HFP$-code $C$ is $\langle \ba,\bb \rangle$ from where $\langle \ba^2 \rangle$ is a cyclic subgroup with $2n$ elements. The associated permutation to $\pi_{\ba}$ is a cyclic shift to the right, so the associated permutations to elements $\ba^2$ are well defined acting over the set of even coordinates. Hence, the projection of elements in $\langle \ba^2 \rangle$ over the even coordinates is a cyclic propelinear code $C^\prime$ of length $n$ and $|C^\prime|=2n$. Since the elements in $C^\prime$ are exactly those in $F$ (up to complement) we conclude that $C^\prime$ is a cyclic $\HFP$-code of length $n$ and $|C^\prime|=2n$ which, from \Cref{lemm:cyclic}, does not exist.
	The statement is proven.\qed
\end{proof}

\section{\boldmath{$\HFP$}-codes. Constructions}\label{sec:5}

In this section we start from an $\HFP$-code of type Q, length $4n$ and dimension of the kernel 2 and we construct a new $\HFP$-code with double length $8n$ and the same dimension of the kernel $k=2$. We also show that when the initial code has maximum rank $r=2n$ the obtained code of length $8n$ has also maximum rank $4n$.

Throughout the section we will take $C=\langle \ba,\bb \rangle$ as an $\HFP$-code of type Q, length $4n=2^sn'$, $s \geq 3$, $n'$ odd and $k=2$. The kernel is $K(C)=\langle \bu,\bkappa\rangle$ where, up to complement, $\bkappa=\ba^\iota\bb$ for some $\iota\in \{0,\ldots,2n-1\}$ (see \Cref{prop:typeR}).

As we already said, given a polynomial $p(x)$ of degree at most $2n-1$ the polynomial $x^{2n-1}p(1/x)$ will be denoted by $\varphi_1(p(x))$.  Now, given a polynomial $p(x)$ of degree at most $4n-1$ the polynomial $x^{4n-1}p(1/x)$ will be denoted by $\varphi_2(p(x))$.

We begin with an example and two technical lemmas. The example is to show that there are $\HFP$-codes of type Q, length $4n$ and dimension of the kernel 2. The lemmas are about the greatest common divisor of polynomials, which will help to compute some ranks.

\begin{example} The code $C=\langle \ba,\bb \rangle$ is an $\HFP$-code of type Q, length $4n=2^sn'=24$ (so $s=3, n'=3$). The rank is $r=12$, and dimension of the kernel is $k=2$ (the kernel is $K(C)=\langle \bu,\bkappa \rangle$, where $\kappa=\ba^{11}\bb$). The generators are
	\begin{equation*}
	\begin{split}
	\ba= &(1, 1, 1, 1, 1, 1, 0, 1, 1, 0, 1, 0\,|| \, 1, 0, 1, 0, 0, 1, 0, 0, 0, 0, 0, 0);\\
	\bb= &(0,1,0,1,0,1,1,1,0,0,0,0\,|| \,1,1,1,1,0,0,0,1,0,1,0,1),
	\end{split}
	\end{equation*}
\end{example}
and the permutations associated to each codeword are given by \Cref{prop:typeR}.

The transpose of this code has the same rank, but dimension of the kernel equal to 1.

\begin{lemma}\label{prop:double1}
	Let  $C=\langle \ba,\bb \rangle$ be an $\HFP$-code of type Q, length $4n$, $k=2$ and set $K(C)=\langle \bu,\bkappa\rangle$, where $\bkappa=\ba^\iota\bb$ (up to complement) for an specific $\iota \in \{0,\ldots,2n-1\}$.
	
	If $\Gcd(a_1(x)+x^{\iota+1}\varphi_1(a_{1}(x)),x^{2n}-1)=x-1$ then $\rank(C)= 2n$, where $(a_1(x),a_2(x))$ is the polynomial representation for $\ba\in C$.
\end{lemma}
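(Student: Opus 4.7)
The strategy is to show $\rank(C)=2n$ by lower-bounding the dimension of $\langle C\rangle_{\F}$ with a $2n$-dimensional cyclic subspace coming from the shifts of $\ba$; combined with the upper bound $r\le 2n$ from \Cref{rk} (valid since $s\ge 3$), this forces equality.

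First, for each $j$, $\pi_{\ba}^{j}(\ba)=\ba^{j+1}+\ba^{j}\in\langle C\rangle_{\F}$, and in polynomial form this shift is $(x^{j}a_1(x),x^{j}a_2(x))\in(\F[x]/(x^{2n}-1))^{2}$. Let $V_1$ be their $\F$-span. Viewing $V_1$ as the image of the $\F$-linear map $T\colon p(x)\mapsto(p(x)a_1(x),p(x)a_2(x))$ defined on $\F[x]/(x^{2n}-1)$, a standard calculation in this principal-ideal quotient gives $\ker T=\bigl((x^{2n}-1)/h(x)\bigr)$, where $h(x)=\gcd(a_1,a_2,x^{2n}-1)$. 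Hence $\dim V_1=2n-\deg h$, and the lemma reduces to proving $h(x)=1$.

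To see $h=1$, note first that $a_1(1)=a_2(1)=1$: this follows from $\bu=\ba^{2n}$, which in polynomial form reads $(u(x),u(x))=(a_1(1)u(x),a_2(1)u(x))$ since $x^{i}u(x)\equiv u(x)\pmod{x^{2n}-1}$ for every $i$. Hence $x-1\nmid h$. Now suppose for contradiction that some irreducible $p(x)\ne x-1$ divides $h$, and let $\zeta$ be a root of $p$ in an algebraic closure of $\F$. Then $\zeta^{2n}=1$ and $\zeta\ne 1$, so $u(\zeta)=0$ because $(x-1)u(x)=x^{2n}-1$. The relation $p\mid a_1$ gives $a_1(\zeta)=0$, and $p\mid a_2=x^{\iota+1}\varphi_1(a_1)+u$ combined with $u(\zeta)=0$ gives $\zeta^{\iota+1}\varphi_1(a_1)(\zeta)=0$; using $\varphi_1(a_1)(\zeta)=\zeta^{-1}a_1(\zeta^{-1})$, this forces $a_1(\zeta^{-1})=0$.

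Set $f(x):=a_1(x)+x^{\iota+1}\varphi_1(a_1(x))$. Evaluating, $f(\zeta)=a_1(\zeta)+\zeta^{\iota}a_1(\zeta^{-1})=0$, so $p\mid f$, whence $p\mid\gcd(f,x^{2n}-1)=x-1$ by hypothesis, contradicting $p\ne x-1$. Therefore $h=1$, $\dim V_1=2n$, and $\rank(C)=2n$. The main obstacle is the third paragraph: one must exploit the explicit form $a_2=x^{\iota+1}\varphi_1(a_1)+u$ from \Cref{prop:typeR}\emph{(iv)} together with the cancellation of $u$ at every nontrivial $2n$-th root of unity to pair the roots $\zeta,\zeta^{-1}$ of $a_1$, and then recognise this pairing as precisely the divisibility relation that the hypothesis $\gcd(f,x^{2n}-1)=x-1$ forbids.
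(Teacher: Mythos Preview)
Your proof is correct and follows essentially the same strategy as the paper's: both show that the cyclic shifts $(x^{j}a_1,x^{j}a_2)$ already span a $2n$-dimensional subspace of $(\F[x]/(x^{2n}-1))^{2}$ by proving $\gcd(a_1,a_2,x^{2n}-1)=1$, and then invoke the upper bound $r\le 2n$ (the paper via \Cref{aske}, you via \Cref{rk}, which amounts to the same thing since $k=2$ forces $s\ge 3$). The only difference is cosmetic: to show the gcd is $1$, the paper works directly with polynomial gcd identities (using $a_2=f+a_1+u$ and $u\mid x^{2n}-1$), whereas you evaluate at a root $\zeta\ne 1$ in an algebraic closure and derive $f(\zeta)=0$ from $a_1(\zeta)=a_2(\zeta)=0$. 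Your route through $a_1(\zeta^{-1})=0$ is a slight detour---one can see $p\mid f$ more quickly from $f=a_1+a_2+u$ and $p\mid u$---but it is perfectly valid.
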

\begin{proof}
	From \Cref{prop:typeR}, the element $\ba=(a_1,a_2)\in C$ could be written, in a polynomial way, as $(a_1(x),x^{\iota+1}\varphi_1(a_{1}(x))+u(x))$.
	Since $\ba^{2n}=\bu$, the weight of $a_1$ is odd (indeed, $\ba^{2n}=\ba+\pi_{\ba}(\ba)+\pi_{\ba^2}(\ba)+\ldots \pi_{\ba^{2n-1}}(\ba)$ and so this means that in each coordinate of $\ba^{2n}$ there is the addition of all coordinates of $\ba$) and so $a_1(x)+x^{\iota+1}\varphi_1(a_{1}(x))+u(x)$ is a multiple of  $x-1$ (the weight of $u(x)$ is $2n$). Therefore, if $\Gcd(a_1(x)+x^{\iota+1}\varphi_1(a_{1}(x))+u(x), x^{2n}-1)=x-1$ then $\Gcd(a_1(x),a_1(x)+x^{\iota+1}\varphi_1(a_{1}(x))+u(x), x^{2n}-1)=1$. Polynomial $u(x)$ divides $x^{2n}-1$ and so $\Gcd(a_1(x),a_1(x)+x^{\iota+1}\varphi_1(a_{1}(x)), x^{2n}-1)=1$. Now, the dicyclic code generated by $a(x)=(a_1(x),a_2(x))$ has rank $2n$.
	
	On the other hand, the linear span of $C$, using polynomials, is linearly generated by
	\begin{equation}\label{eq:4}
	\{a(x), xa(x),\ldots, x^{2n-1}a(x), b(x), xb(x), \ldots, x^{2n-1}b(x)\},
	\end{equation}
	and, since the polynomials associated to $\bkappa$ are $(\kappa_1(x),\kappa_2(x))$, where $\kappa_i(x)= 1+x^2+\ldots +x^{2n-2}$, up to complement, and $x\kappa_i(x) =\kappa_i(x)+u(x)$ then \cref{eq:4} is simplified to
	\begin{eqnarray}
	\{ a(x), xa(x),\ldots, x^{2n-1}a(x), \kappa(x)\}.
	\end{eqnarray}
	So, the rank $r$ of the code $C$ is $2n\leq r\leq 2n+1$ and, from \Cref{aske} the rank is upper bounded by $2n$. Hence $\rank(C)=2n$. The statement is proven.\qed
\end{proof}

\begin{lemma}\label{prop:double2}
	Let  $C=\langle \ba,\bb \rangle$ be an $\HFP$-code of type Q, length $4n$, $k=2$ and set $K(C)=\langle \bu,\bkappa\rangle$, where $\bkappa=\ba^\iota\bb$ for an specific $\iota \in \{0,\ldots,2n-1\}$.
	
	If $\gcd(a_1(x^2)+x\kappa_1(x^2)+x^{2\iota+1}(a_1(x^2)+x\varphi_2(\kappa_{1}(x^2)), x^{4n}-1)\not= x-1$ then $\gcd(a_1(x)+x^{\iota+1}\varphi_1(a_{1}(x)),x^{2n}-1)\not= x-1$.
\end{lemma}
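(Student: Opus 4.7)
The plan is to prove the contrapositive: assume $\gcd(Q(x),x^{2n}-1)=x-1$, where $Q(x):=a_1(x)+x^{\iota+1}\varphi_1(a_1(x))$, and show that $\gcd(P(x),x^{4n}-1)=x-1$ for the polynomial $P(x)$ in the statement. The engine is the identity $(x^4-1)\kappa_1(x^2)=x^{4n}-1$ in $\F_2[x]$: writing $\Psi(x):=(x^{n'}-1)/(x-1)$ and noting that $x^4-1=(x-1)^4$ in characteristic $2$, this gives $\kappa_1(x^2)=(x-1)^{2^s-4}\Psi(x)^{2^s}$ with $2^s-4\ge 4$ since $s\ge 3$; a short direct check yields $\varphi_2(\kappa_1(x^2))=x^{3}\kappa_1(x^2)$, so both $\kappa_1(x^2)$ and $\varphi_2(\kappa_1(x^2))$ are divisible by $f^{2^s}$ for every irreducible $f\ne x-1$ of $x^{n'}-1$, and by $(x-1)^{4}$.

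These divisibilities collapse $P$ modulo the relevant factors: $P(x)\equiv (1+x^{2\iota+1})\,a_1(x^2)\pmod{f(x)}$ for each irreducible $f\ne x-1$ of $x^{n'}-1$, and the same congruence holds modulo $(x-1)^2$. From this the $(x-1)$-multiplicity in $P$ is immediately $1$: writing $1+x^{2\iota+1}=(x-1)(1+x+\cdots+x^{2\iota})$, the cofactor evaluates at $x=1$ to $2\iota+1\equiv 1\pmod{2}$, and $a_1(1)=1$ because $\ba^{2n}=\bu$. So $(x-1)\parallel P(x)$ unconditionally, and the only way $\gcd(P,x^{4n}-1)$ can fail to be $x-1$ is that some irreducible $f\ne x-1$ of $x^{n'}-1$ divides $P$; by the above congruence this forces $f\mid (1+x^{2\iota+1})$ or $f\mid a_1$.

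I would close the argument by showing that each alternative produces a nontrivial factor of $\gcd(Q,x^{2n}-1)$, contradicting the hypothesis. Pick a root $\zeta\ne 1$ of $f$. In the first subcase $\zeta^{2\iota+1}=1$: combining the symmetry $Q(x)\equiv x^{\iota}Q(x^{-1})\pmod{x^{2n}-1}$ (a direct consequence of $\varphi_1(p)(x)=x^{-1}p(x^{-1})$ in $\F_2[x]/(x^{2n}-1)$) with the Frobenius identity $Q(x^2)=Q(x)^{2}$, and with $\zeta^{2\iota+1}=1$, one should extract $Q(\zeta)=0$. In the second subcase $f\mid a_1$, the propelinear constraint $\bkappa=\ba^{\iota}\bb\in K(C)$ should yield the reciprocal symmetry $f\mid a_1\Rightarrow f^{*}\mid a_1$ (with $f^{*}$ the reciprocal polynomial of $f$); then $a_1(\zeta^{-1})=0$, and $Q(\zeta)=a_1(\zeta)+\zeta^{\iota}a_1(\zeta^{-1})=0$.

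The main obstacle lies precisely in the two case-closing arguments of the last paragraph. The naked symmetry $Q(\zeta)=\zeta^{\iota}Q(\zeta^{-1})$ by itself is not enough to deduce $Q(\zeta)=0$ from $\zeta^{2\iota+1}=1$, so the Frobenius-squared chain has to be set up carefully to avoid producing only the tautology $\sqrt{\zeta}+\zeta^{-\iota}=0$. Likewise, the inversion symmetry of $\operatorname{Supp}(a_1)$ is not visible from Proposition~\ref{prop:typeR} alone; extracting it cleanly from the full HFP-structure of $\langle\ba,\bb\rangle$ together with $\bkappa\in K(C)$ is the other nontrivial step. Everything else is routine divisibility bookkeeping inside $\F_2[x]/(x^{4n}-1)$.
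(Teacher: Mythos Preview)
Your approach founders on a misreading of the polynomial $P(x)$, which is understandable since the statement carries an unbalanced parenthesis. The intended polynomial is
\[
P(x)=A_1(x)+x^{2\iota+1}\varphi_2(A_1(x)),\qquad A_1(x)=a_1(x^2)+x\kappa_1(x^2),
\]
i.e.\ $\varphi_2$ is applied to the whole sum $A_1$, not only to $\kappa_1(x^2)$. With that reading the paper's one-line identity
\[
P(x)=\bigl(a_1(x)+x^{\iota+1}\varphi_1(a_1(x))\bigr)^2+x\bigl(\kappa_1(x)+x^{\iota}\varphi_1(\kappa_1(x))\bigr)^2
     =Q(x)^2+xR(x)^2
\]
(using $\varphi_2(p(x^2))=x\,\varphi_1(p)(x^2)$ and Frobenius) does all the work: by \Cref{prop:typeR} one has $R\in\{0,u\}$, so $R(\zeta)=0$ at every root $\zeta\ne 1$ of $x^{n'}-1$, giving $P(\zeta)=Q(\zeta)^2$ and hence $f\mid P\Leftrightarrow f\mid Q$ for each such irreducible $f$. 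No case split on ``$f\mid 1+x^{2\iota+1}$'' versus ``$f\mid a_1$'' is needed.

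Your congruence $P\equiv (1+x^{2\iota+1})a_1(x^2)\pmod{f}$ holds only for the typo'd expression (where the reflected term is $x^{2\iota+1}a_1(x^2)$ rather than $x^{2\iota+1}\varphi_2(a_1(x^2))$); for the intended $P$ the value at a root $\zeta$ of $f$ is $a_1(\zeta^2)+\zeta^{2\iota}a_1(\zeta^{-2})=Q(\zeta)^2$, not $(1+\zeta^{2\iota+1})a_1(\zeta^2)$. This is why your two ``case-closing'' steps feel unprovable: they \emph{are} unprovable from the hypotheses---there is no reciprocal symmetry of $a_1$ to invoke, and $\zeta^{2\iota+1}=1$ does not force $Q(\zeta)=0$. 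Your claim that $(x-1)\parallel P$ unconditionally is likewise an artifact of the misreading: for the correct $P=Q^2+xR^2$ one has $Q(1)=R(1)=0$, so $(x-1)^2\mid P$ always. The fix is simply to use the square decomposition above in place of your modular reduction; once you see $P\equiv Q^2\pmod{f}$ the lemma is immediate.
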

\begin{proof}
	If $p(x)$ is any polynomial of degree at the most $2n-1$ then we have $\varphi_2(p(x^2))= x(\varphi_1(p(x^2)))$.\\
	Indeed, if $p(x)=\sum_{i=0}^{2n-1}p_ix^i$ then $p(x^2)=\sum_{i=0}^{2n-1}p_{i}x^{2i}$ and  $\varphi_2(p(x^2))=\sum_{i=0}^{2n-1}p_ix^{4n-1-2i}$.
	On the other hand, $\varphi_1(p(x))=\sum_{i=0}^{2n-1}p_{i}x^{2n-1-i}$ and \\
	$x\varphi_1(p(x^2))=\sum_{i=0}^{2n-1}p_{i}x^{4n-1-2i}$.
	
	We have
	\begin{equation}\label{eq:33}
	\begin{split}
	&a_1(x^2)+x\kappa_1(x^2)+x^{2\iota+1}(a_1(x^2)+x\varphi_2(\kappa_1(x^2)) \\
	= &a_1(x^2)+x\kappa_1(x^2)+x^{2\iota+1}\varphi_2(a_1(x^2))+ x^{2\iota+1}x^{4n-1}\varphi_2(\kappa_1(x^2))\\
	= &a_1(x^2)+x^{2\iota+1}\varphi_2(a_1(x^2))+x\kappa_1(x^2)+ x^{2\iota} \varphi_2(\kappa_1(x^2))\\ =&a_1(x^2)+x^{2\iota+2}\varphi_1(a_{1}(x^2))+x\kappa_1(x^2)+x^{2\iota+1}\varphi_1(\kappa_{1}(x^2))\\
	= &(a_1(x)+x^{\iota+1}\varphi_1(a_{1}(x)))^2+x(\kappa_1(x)+x^{\iota}\varphi_1(\kappa_{1}(x)))^2.
	\end{split}
	\end{equation}
	
	From \Cref{prop:typeR} the polynomial $\kappa_1(x)+x^{\iota}\varphi_1(\kappa_{1}(x))$ is either zero or $u(x)$, depending on the parity of $\iota$, and $x=1$ is a root of $u(x)$.
	
	If
	$\gcd(a_1(x^2)+x\kappa_1(x^2)+x^{2\iota+1}(a_1(x^2)+x\varphi_2(\kappa_1(x^2)), x^{4n}-1)\not= x-1$ then it could be that $x=1$ is not a root of $a_1(x^2)+x\kappa_1(x^2)+x^{2\iota+1}(a_1(x^2)+x\varphi_2(\kappa_1(x^2))$ and so neither is a root of $a_1(x)+x^{\iota+1}\varphi_1(a_{1}(x))$. \\Hence, $\gcd(a_1(x)+x^{\iota+1}\varphi_1(a_{1}(x)),x^{2n}-1)\not= x-1$.
	
	Also, it could be that $x-1$ is a root of $a_1(x^2)+x\kappa_1(x^2)+x^{2\iota+1}(a_1(x^2)+x\varphi_2(\kappa_1(x^2))$, but there are more roots in that polynomial, for instance $x=w$, where $w\not=1$ is a root of $x^{4n}-1$. In this case, $w$ is also a root of $\kappa_1(x)+x^{\iota}\varphi_1(\kappa_{1}(x))$ and so, from (\ref{eq:33}), a root of $(a_1(x)+x^{\iota+1}\varphi_1(a_{1}(x)))^2$. Since $x^{4n}-1=(x^{2n}-1)^2$, $w$ is also a root of $x^{2n}-1$ and we obtain $\gcd(a_1(x)+x^{\iota+1}\varphi_1(a_{1}(x)),x^{2n}-1)\not= x-1$. The statement is proven.\qed
\end{proof}

\begin{proposition}\label{prop:double}
	Let  $C=\langle \ba,\bb \rangle$ be an $\HFP$-code of type Q, length $4n$ and $k=2$. Then, there exists two $\HFP$-codes $E$ of type Q, length $8n$, one with dimension of the kernel $k=2$ and another with dimension of the kernel $k=1$. In both cases, if $\rank(C)=2n$ then $\rank(E)=4n$.
\end{proposition}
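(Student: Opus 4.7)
The plan is to construct $E$ by a polynomial \emph{doubling} procedure: substitute $x \mapsto x^2$ in the polynomial representation of $\ba$ and twist by the kernel generator $\bkappa$. Writing $\ba = (a_1(x), a_2(x))$ and $\bkappa = (\kappa_1(x), \kappa_2(x))$ as in Section 3, for the $k=2$ construction define $\ba' \in \F^{8n}$ by the two halves
\begin{equation*}
a'_1(x) = a_1(x^2) + x\kappa_1(x^2), \qquad a'_2(x) = a_2(x^2) + x\kappa_2(x^2),
\end{equation*}
and let $\bb'$ be the element of length $8n$ uniquely determined from $\ba'$ by Proposition \ref{prop:3.8}. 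For the $k=1$ construction, use a variant (for instance $a'_1(x) = a_1(x^2)$ and $a'_2(x) = a_2(x^2) + x u(x^2)$) that preserves the Hadamard structure but destroys the analogue of the kernel element.

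The verification proceeds in three steps. First, check the type Q presentation for $\ba', \bb'$: the order of $\ba'$ is $8n$ because squaring doubles cyclic-shift orders while the twist $x\kappa_i(x^2)$ has additive order $2$ and interacts correctly with $\pi_{\ba'}$ (using $\bkappa \in K(C)$, $\bkappa^2 = \bu$, and $\pi_\ba(\bkappa) \in \{\bkappa, \bkappa\bu\}$ from Lemma \ref{prop:3.2}); the relation $\bb'^{-1}\ba'\bb' = \ba'^{-1}$ is the length-$8n$ analogue of Proposition \ref{prop:3.8}, and works because $\varphi_2(p(x^2)) = x \cdot \varphi_1(p(x))|_{x \mapsto x^2}$ (as observed inside the proof of Lemma \ref{prop:double2}); and $\bu' = \ba'^{4n}$ is the all-ones vector. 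Second, establish the Hadamard property via Lemma \ref{prop:vice}: it suffices to show every non-trivial codeword of $\langle \ba', \bb'\rangle$ has weight $4n$, and this reduces to the weight equalities already known in $C$ via $\wt(a_i(x^2)) = 2\wt(a_i(x))$ and $\wt(\kappa_i(x^2)) = n$. The fixed-point-freeness of the permutations $\pi_{\ba'^i}$ and $\pi_{\ba'^i\bb'}$ follows from the length-$8n$ analogue of Proposition \ref{prop:3.7}.

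Third, compute the kernel and rank. For the $k=2$ construction, verify that $\bkappa' := \ba'^{2\iota}\bb'$ lies in $K(E)$ by direct propelinear computation (the new kernel exponent is $2\iota$ because of the $x\mapsto x^2$ substitution); Theorem \ref{rk} then forces $k(E)=2$. For the $k=1$ construction, rule out any candidate kernel element of the form $\ba'^j\bb'$ by a parity/fixed-coordinate argument in the spirit of Proposition \ref{ker}, so $k(E)=1$. Finally, assuming $\rank(C)=2n$, Lemma \ref{prop:double1} applied to $C$ gives $\gcd(a_1(x) + x^{\iota+1}\varphi_1(a_1(x)), x^{2n}-1) = x-1$; the contrapositive of Lemma \ref{prop:double2} transports this to the $\gcd$ equality for $E$ with new kernel exponent $2\iota$, and Lemma \ref{prop:double1} applied to $E$ concludes $\rank(E)=4n$ for the $k=2$ construction, with an analogous $\gcd$ argument (or direct adaptation of the polynomial span calculation in the proof of Lemma \ref{prop:double1}) handling the $k=1$ construction. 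The main obstacle is the careful bookkeeping in steps one and two, namely verifying that $\ba', \bb'$ simultaneously satisfy the type Q relations, the full propelinear (fixed-point-free) condition, and the weight-$4n$ condition; once these are secured, the kernel and rank claims in step three follow essentially as algebraic consequences of Lemmas \ref{prop:double1} and \ref{prop:double2}.
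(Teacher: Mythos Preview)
Your doubling construction for the $k=2$ code matches the paper's exactly: the paper also defines $A_i(x)=a_i(x^2)+x\kappa_i(x^2)$ together with the obvious length-$8n$ permutations, and verifies the Hadamard property via \Cref{prop:vice} by decomposing $A^{(i)}(x)$ into an even/odd part $p_1(x^2)+xp_2(x^2)$ with $p_1,p_2$ codewords of $C$ (this decomposition is where $\bkappa\in K(C)$ is used, and is a bit more delicate than the bare identities $\wt(a_i(x^2))=2\wt(a_i(x))$, $\wt(\kappa_i(x^2))=n$ that you quote). One small correction: the new kernel element is $\bKappa=\bA^{2\iota+1}\bB$, not $\bA^{2\iota}\bB$; check this against the explicit $K_i(x)=1+x^2+\cdots+x^{4n-2}$ and the parity clause of \Cref{prop:typeR}.

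The genuine gap is your $k=1$ construction. The paper does \emph{not} build a second variant from scratch: it simply takes the transpose $E^T$ of the $k=2$ code $E$ just constructed and invokes \Cref{prop:trans}, which immediately gives $k(E^T)=1$. This also settles the rank for free, since the row span and column span of the binary Hadamard matrix have the same dimension, so $\rank(E^T)=\rank(E)=4n$. Your proposed alternative (``for instance $a'_1(x)=a_1(x^2)$, $a'_2(x)=a_2(x^2)+xu(x^2)$'') is unjustified: you have not checked that it is Hadamard, that it is of type~Q, or that no $\ba'^j\bb'$ lies in the kernel, and the ``analogous $\gcd$ argument'' for its rank is not available since \Cref{prop:double2} is tailored to the specific twist $x\kappa_i(x^2)$. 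Replace your second construction by the transpose and the proof goes through.
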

\begin{proof}
	Let $(a_1(x), a_2(x))$, $(b_1(x), b_2(x))$ and $(\kappa_1(x),\kappa_2(x))$ be the polynomial representation associated to $\ba,\bb,\bkappa=\ba^{\iota}\bb$, respectively, in code $C$.
	We will construct code $E$ taking $\bA,\bKappa=\bA^{2\iota+1}\bB$ as its generators, where the polynomial representations are $(A_1(x), A_2(x))$ and $(K_1(x),K_2(x))$ with
	\begin{equation*}
	\begin{split}
	A_i(x)&=a_i(x^2)+x\kappa_i(x^2)\in \F[x]/x^{4n}-1,\\
	K_i(x)&=1+x^2+\ldots+x^{4n-2}\in \F[x]/x^{4n}-1,
	\end{split}
	\end{equation*}
	respectively.
	We will also take $\pi_\bA=(1,2,\ldots,4n)(4n+1,4n+2,\ldots,8n)$, $\pi_\bB=(1,8n)(2,8n-1)\cdots(4n,4n+1)$ and $\pi_{\bKappa}=\pi_{\bA}^{2\iota+1}\pi_{\bB}$.
	%The above construction is similar to the one given in \cite{itoIII}.
	
	It is easy to see that the full propelinear properties of $C$ are maintained in $E$, so $E$ is a full propelinear code.
	
	To show that $E$ is Hadamard, from \Cref{prop:vice}, we need to show that $\wt(\bA^i)=\wt(\bA^i+\bKappa)=4n$, for $i\in\{1,\ldots,4n-1\}$ and $\wt(\bA^{4n})=8n$, $\wt(\bA^{4n}+\bKappa)=4n$.
	Let $A^{(i)}(x)$ be any of the two polynomials associated to the element $\bA^i$, say the first one (for the other polynomial the argument will be the same). Also say $a(x)^{(i)}$ and $\kappa(x)^{(i)}$ the first of the two polynomials associated to $\ba^i,\bkappa^i$, respectively.
	We have
	\begin{equation*}
	\begin{split}
	A^{(2i)}(x)= &(1+x+\ldots+x^{2i-1})a(x^2)+x(1+x+\ldots+x^{2i-1})\kappa(x^2)\\
	=&(1+x^2+\ldots+x^{2i-2})a(x^2)+x(1+x^2+\ldots+x^{2i})\kappa(x^2)\\
	+& x(1+x^2+\ldots+x^{2i-2})a(x^2)+x^2(1+x^2+\ldots+x^{2i})\kappa(x^2)\\
	=& a^{(i)}(x^2)+x\kappa^{(i)}(x^2)+xa^{(i)}(x^2)+x^2\kappa^{(i)}(x^2)
	\end{split}
	\end{equation*}
	We have $x^2\kappa^{(i)}(x^2)=\kappa^{(i)}(x^2)+ \xi_i u(x^2)$, where $\xi_i$ is 0 or 1 depending on $i$ is even or odd, respectively.
	Hence, $A^{(2i)}(x)= p_1(x^2)+xp_2(x^2)$, where $p_1(x^2)=a^{(i)}(x^2)+ \kappa^{(i)}(x^2)+\xi_i u(x^2)$ and $p_2(x^2)=a^{(i)}(x^2)+ \kappa^{(i)}(x^2)$.
	Since $\bkappa, \bu\in K(C)$ we have that both, $p_1(x)$ and $p_2(x)$ are in $C$ and so, $\wt(\bA^i)=4n$ and $\wt(\bA^i+\bKappa)=4n$, for even exponents $i\in\{2,4,\ldots,4n\}$, except for $i=4n$ in which case $\wt(\bA^{4n})=8n$.
	For odd exponents $i\in\{1,3,\ldots,4n-1\}$, using the same decomposition as for the even case, we have
	\begin{equation*}
	\begin{split}
	A^{(2i+1)}(x)=&a^{(i+1)}(x^2)+x a^{(i)}(x^2) + x \kappa^{(i+1)}(x^2)+x^2\kappa^{(i)}(x^2) \\
	=&q_1(x^2)+xq_2(x^2),
	\end{split}
	\end{equation*}
	where
	\begin{equation*}
	\begin{split}
	q_1(x^2)=&a^{(i+1)}(x^2)+ \kappa^{(i)}(x^2)+\xi_i u(x^2)\\
	p_2(x^2)=&a^{(i)}(x^2)+ \kappa^{(i+1)}(x^2).
	\end{split}
	\end{equation*}
	Both, $q_1(x)$ and $q_2(x)$ are in $C$ so, $\wt(\bA^i)=4n$ and $\wt(\bA^i+\bKappa)=4n$.
	
	Regarding the kernel of $E$, we show that $\bKappa \in K(E)$. To do this, we prove that $\pi_\bKappa\in \Aut(E)$. It is clear that $\pi_\bKappa(\bKappa)$ is either $\bKappa$ or $\bKappa+\bu$. In any case $\pi_\bKappa(\bKappa)\in E$.
	Also, $\pi_\bKappa(\bA^i)=\bKappa + \bA^i+ \pi_{\bA^i}(\bKappa)$. We have that $\bKappa +\pi_{\bA^i}(\bKappa)$ is either $\be$ or $\bu$ so, in any case, $\pi_\bKappa(\bA^i) \in E$. Hence, the dimension of the kernel is $k\geq 2$ and from \Cref{rk} we have $k=2$.
	
	For the rank, the result is clear from \Cref{prop:double1} and \Cref{prop:double2}.
	
	Finally, note that the constructed code $E$ has dimension of the kernel equal to 2. Now, from \Cref{prop:trans}, we can construct the transposed code of $E$, which has dimension of the kernel equal to 1.
	The statement is proven.\qed
\end{proof}

The construction in the above \Cref{prop:double} is more specific than a previous construction in \cite[Prop. 1]{itoIII}. There, it is shown that given an Hadamard group of type Q we can obtain an Hadamard group of type Q with double length. Here, in \Cref{prop:double} we show that from an $\HFP$-code of type Q with dimension of the kernel $k=2$ and maximum rank, we obtain an $\HFP$-code of type Q with double length, dimension of the kernel $k=2$ and maximum rank. Moreover, if we are looking for $\HFP$-codes of double length and dimension of the kernel $k=1$, we can obtain them applying \Cref{prop:trans}.

\vspace*{0,3cm}
\textbf{Acknowledgment.}
This work has been partially supported by the Spanish MICINN grants
	TIN2016-77918-P, MTM2015-69138-REDT and the Catalan AGAUR grant
	2014SGR-691.


\begin{thebibliography}{10}
	
	\bibitem{ak}
	E.~Assmus and J.~D. Key.
	\newblock Designs and their codes cambridge tracts in mathematics, 103, 1992.
	
	\bibitem{bbr}
	I.~Bailera, J.~Borges, and J.~Rif{\`a}.
	\newblock About some hadamard full propelinear (2t,2,2)-codes. rank and kernel.
	\newblock {\em Electronic Notes in Discrete Mathematics}, 54:319--324, 2016.
	
	\bibitem{bmrs}
	J.~Borges, I.~Y. Mogilnykh, J.~Rif{\`a}, and F.~I. Solov'eva.
	\newblock Structural properties of binary propelinear codes.
	\newblock {\em Advances in Mathematics of Communications}, 6(3), 2012.
	
	\bibitem{lfh}
	W.~{de Launey}, D.~L. Flannery, and K.~J. Horadam.
	\newblock Cocyclic hadamard matrices and difference sets.
	\newblock {\em Discrete Applied Mathematics}, 102(1-2):47--61, 2000.
	
	\bibitem{riri}
	{\'{A}}.~{del R\'{i}o} and J.~Rif\`{a}.
	\newblock Families of hadamard $Z2Z4Q8$-codes.
	\newblock {\em IEEE Transactions on Information Theory}, 59(8):5140--5151,
	2013.
	
	\bibitem{eb}
	J.~Elliott, A.~Butson, et~al.
	\newblock Relative difference sets.
	\newblock {\em Illinois Journal of Mathematics}, 10(3):517--531, 1966.
	
	\bibitem{flan}
	D.~Flannery.
	\newblock Cocyclic hadamard matrices and hadamard groups are equivalent.
	\newblock {\em Journal of Algebra}, 192(2):749--779, 1997.
	
	\bibitem{hl}
	K.~Horadam and W.~de~Launey.
	\newblock Cocyclic development of designs.
	\newblock {\em Journal of Algebraic Combinatorics}, 2(3):267--290, 1993.
	
	\bibitem{hor}
	K.~J. Horadam.
	\newblock Hadamard matrices and their applications: Progress 2007--2010.
	\newblock {\em Cryptography and Communications}, 2(2):129--154, 2010.
	
	\bibitem{ito}
	N.~Ito.
	\newblock On hadamard groups.
	\newblock {\em Journal of Algebra}, 168(3):981 -- 987, 1994.
	
	\bibitem{itoremarks}
	N.~Ito.
	\newblock Remarks on Hadamard groups.
	\newblock {\em Kyushu Journal of Mathematics}, 50(1):83--91, 1996.
		
	\bibitem{itoIII}
	N.~Ito.
	\newblock On Hadamard groups III.
	\newblock {\em Kyushu Journal of Mathematics}, 51(2):369--379, 1997.
	
	\bibitem{prv1}
	K.~T. Phelps, J.~Rif\`{a}, and M.~Villanueva.
	\newblock Rank and kernel of binary Hadamard codes.
	\newblock {\em Information Theory, IEEE Transactions on}, 51(11):3931--3937,
	2005.
	
	\bibitem{prv3}
	K.~T. Phelps, J.~Rif\`{a}, and M.~Villanueva.
	\newblock Hadamard codes of length $2^ts$ ($s$ odd). rank and kernel.
	\newblock In {\em LNCS 3857. Applied Algebra, Algebraic Algorithms and
		Error-Correcting Codes}, pages 328--337. Springer, 2006.
	
	\bibitem{prv2}
	K.~T. Phelps, J.~Rif\`{a}, and M.~Villanueva.
	\newblock On the additive ($Z4$-linear and non-$Z4$-linear) {H}adamard
	codes: rank and kernel.
	\newblock {\em IEEE Transactions on Information Theory}, 52(1):316--319, 2006.
	
	\bibitem{rbh}
	J.~Rif\`{a}, J.~M. Basart, and L.~Huguet.
	\newblock On completely regular propelinear codes.
	\newblock In {\em LNCS 357. Applied Algebra, Algebraic Algorithms and
		Error-Correcting Codes}, pages 341--355. Springer, 1989.
	
	\bibitem{rs}
	J.~Rif\`{a} and E.~Su\'{a}rez.
	\newblock About a class of hadamard propelinear codes.
	\newblock {\em Electronic Notes in Discrete Mathematics}, 46:289--296, 2014.
	
	\bibitem{sch}
	B.~Schmidt.
	\newblock Williamson matrices and a conjecture of ito's.
	\newblock {\em Designs, Codes and Cryptography}, 17(1-3):61--68, 1999.
	
\end{thebibliography}
\end{document}